\providecommand\@dotsep{5}
\def\listtodoname{List of Todos}
\def\listoftodos{\@starttoc{tdo}\listtodoname}
\numberwithin{equation}{section}
\newtheorem{Th}{Theorem}[section]
\newtheorem{Prop}[Th]{Proposition}
\newtheorem{Lem}[Th]{Lemma}
\newtheorem{Rem}[Th]{Remark}
   \newcommand{\vp}{\varphi}
   \newcommand{\eps}{\varepsilon}
   \def\N{\mathbb{N}}
   \def\R{\mathbb{R}}
   \def\Qc{\mathcal{Q}}
   \def\X{\mathcal{X}}
   \def\RN{\mathbb{R}^N}
   \def\n{\nabla}
	\def\de{\partial}
\def\a{\alpha}
\def\b{\beta}
\def\l{\lambda}
\def\g{\gamma}
\def\s{\sigma}
\def\t{\theta}
\def\irn{\int_{\RN}}
\def\dis{\displaystyle}
\newcommand{\weakto}{\rightharpoonup}
\newcommand{\cC}{{\mathcal C}}
\title[Born-Infeld problem]{Born-Infeld problem with general nonlinearity}
\author[J. Mederski]{Jaros\l aw Mederski}
\author[A. Pomponio]{Alessio Pomponio}
\address{J. Mederski \hfill\break  
	\newline\indent Institute of Mathematics,
	\newline\indent Polish Academy of Sciences  
	\newline\indent ul. \'Sniadeckich 8, 00-656 Warsaw, Poland
	\newline\indent and
	\newline\indent Departement of Mathematics, Institute for Analysis
	\newline\indent Karlsruhe Institute of Technology (KIT)
	\newline\indent D-76128 Karlsruhe, Germany
}
\email{\href{mailto:jmederski@impan.pl}{jmederski@impan.pl}}
\address{A. Pomponio
\hfill\break  \newline\indent
Dipartimento di Meccanica, Matematica e Management
\newline\indent 
Politecnico di Bari
\newline\indent
Via Orabona 4,  70125  Bari, Italy}
\email{\href{mailto:alessio.pomponio@poliba.it}{alessio.pomponio@poliba.it}}
\thanks{A. Pomponio is partially supported by  PRIN 2017JPCAPN {\em Qualitative and quantitative aspects of nonlinear
PDEs}. J. Mederski is partially supported by the National Science Centre, Poland, Grant No. 2017/26/E/ST1/00817 and the Deutsche Forschungsgemeinschaft (DFG, German Research Foundation) – Project-ID 258734477 – SFB 1173}
\subjclass[2010]{35A15, 35J25, 35J93, 35Q75.}
\keywords{Born-Infeld theory, mean curvature operator, Lorentz-Minkowski space, nonlinear scalar field equation, variational methods}
\begin{document}
\begin{abstract} 
In this paper, using variational methods, we look for non-trivial solutions for the following problem
$$
\begin{cases}
-{\rm div}\left(a(|\nabla u|^2)\nabla u\right)=g(u), & \hbox{in }\mathbb{R}^N,\; N\geq 3,
\\[1mm]
u(x)\to 0, &\hbox{as }|x|\to +\infty,
\end{cases}
$$
under general assumptions on the continuous nonlinearity $g$. We assume only growth conditions of $g$ at $0$, however no growth conditions at infinity are imposed.  If $a(s)=(1-s)^{-1/2}$, we obtain the well-known Born-Infeld operator, but we are able to study also a general class of $a$ such that $a(s)\to+\infty$ as $s\to 1^{-}$.
We find a radial solution to the problem with finite energy.
\end{abstract}
\maketitle

\section{Introduction}

Almost a century ago, Born and Infeld introduced a new electromagnetic theory in a series of papers (see \cite{Bnat,B,BInat,BI})
as a nonlinear alternative to
the classical Maxwell theory. This theory was proposed to provide a model presenting a unitarian point of view to describe electrodynamics and had the notable feature to be a fine answer to the well-known {\em infinity energy problem}. In the Born-Infeld model, indeed, the electromagnetic field generated by a point charge has finite energy.
A crucial role is played by the following peculiar differential operator
\begin{equation*}
\Qc(u)=-{\rm div}\left(\dfrac{\n u}{\sqrt{1-|\n u|^2}}\right).
\end{equation*}
Such operator is present also in classical relativity, where it represents the mean curvature operator in Lorentz-Minkowski space, see for instance \cite{BS,CY}. 

In last years many authors focused their attention to problems related to  $\Qc$ in the whole $\RN$, with $N\ge 1$. In particular, some results for
\[
-{\rm div}\left(\dfrac{\n u}{\sqrt{1-|\n u|^2}}\right)=\rho, \qquad\hbox{ in }\RN,
\]
can be found in \cite{BCF,BDP,BDPR,Bon-Iac2,Bon-Iac3,H,K,K-corr}, under different assumptions on $\rho$. Here $\rho$ can be considered as an assigned charges source. See also \cite{APS}, where the Born-Infeld equation in coupled with the nonlinear Schr\"odinger one.

Few is still known, at contrary, in presence of a nonlinearity, namely for equations of this type
\begin{equation}\label{cong}
-{\rm div}\left(\dfrac{\n u}{\sqrt{1-|\n u|^2}}\right)=g(u), \qquad\hbox{ in }\RN.
\end{equation}

Let us observe that classical variational techniques do not work directly for this problem, due to the particular nature of the operator $\Qc$. Indeed, at least formally, solutions of \eqref{cong} are critical points of the functional
\[
I(u)=\irn \left(1-\sqrt{1-|\n u|^2}\right) -\irn G(u)\, dx,
\]
where $G$ is a primitive of $g$. However, since 
we have to impose the condition $|\n u|\le 1$, a.e. in $\RN$, the lack of regularity of the functional on the set $\{x\in \RN :|\n u|=1\}$ requires different and non-standard strategies.

One of the first paper dealing with this kind of problem using variational methods is \cite{BDD}, where $g(s)=|s|^{p-2}s$, for $p>2^*=\frac{2N}{N-2}$ and $N\ge 3$. By means of suitable truncation arguments (that will be crucial in our approach, as we will see later), the existence of {\em finite energy} solutions is proved.

We mention, moreover, \cite{A,A2,P} where \eqref{cong} has been studied by means of ODE-techniques finding solutions which could have infinite energy.  
In particular, in \cite{A,A2}, the existence of positive or sign-changing radial solutions is considered for a pure power nonlinearity or under suitable sign assumptions on $g$ (a prototype of such nonlinearity is  $g(s)=-\l s+s^p$, for $\l>0$ and $p>1$). In \cite{P}, instead, the existence of oscillating solutions of \eqref{cong}, namely with an unbounded sequence of zeros, is proved for nonlinearities such that $g'(0) > 0$.  
Finally, in \cite{BCP}, a similar problem is considered in an exterior domain. 

Our aim is to show existence of finite energy radial solutions involving a large class of operators and nonlinearities  in the spirit of Berestycki and Lions \cite{BerLions,BerLionsII} and we will present an adequate variational approach for the problem.
 More precisely we consider
\begin{equation}\label{eq}
\begin{cases}
-{\rm div}\left(a(|\n u|^2)\n u\right)=g(u), & \hbox{in }\RN,\; N\geq 3,
\\[5mm]
u(x)\to 0, &\hbox{as }|x|\to +\infty,
\end{cases}
\end{equation}
under the following assumptions on $a$:
\begin{enumerate}[label=(a\arabic{*}), ref=a\arabic{*}]
	\setcounter{enumi}{-1}
	\item \label{a0}$a:[0,1) \to (0,+\infty)$ is continuous, of class $\cC^1$ on $(0,1)$, and $[0,1) \ni s\mapsto a(s)s$ is strictly convex;
	\item \label{a1} 
	$\displaystyle\lim_{s\to 1^-}a(s)=+\infty;$
\end{enumerate}
and on the nonlinearity $g$:
\begin{enumerate}[label=(g\arabic{*}), ref=g\arabic{*}]
\setcounter{enumi}{-1}
	\item \label{g0}$g:\R \to \R$ is continuous and odd;
	\item \label{g1} for some $\gamma \geq 2^*/2$, we have
$$-\infty<\displaystyle\liminf_{s\to 0}\frac{g(s)}{|s|^{\gamma-1}}
\leq\displaystyle\limsup_{s\to 0}\frac{g(s)}{|s|^{\gamma-1}}=-m<0;$$
	\item \label{g3}there exists  $\xi_0>0$ such that $G(\xi_0)>0$, where 
	$$G(s)=\int_0^s g(t)\, dt,\quad\hbox{for }s\in\R.$$
\end{enumerate}
Clearly, $a(s)=(1-s)^{\alpha}$ with $\alpha<0$ satisfies \eqref{a0},  \eqref{a1}, and we get the operator $\Qc$ for $\alpha=-1/2$. Another important example is the following general mean curvature operator arising in the study of hypersurfaces in the Lorentz–Minkowski space $\mathbb{L}^{N+1}$ and in $\R^{N+1}$ given by
\begin{equation}\label{eq:GMC}
a(s):=\beta (1-s)^{-1/2}-\gamma (1+s)^{-1/2},\quad \beta>0, \gamma\geq 0,
\end{equation}
see  \cite{CY,Kob,Dai}  and references therein.

With regard to $g$,  by assumption \eqref{g1}, the problem is in the so called {\em positive mass case}.
We will consider also the {\em zero mass case} namely, instead of \eqref{g1}, we will assume
\begin{enumerate}[label=(g\arabic{*}$'$), ref=g\arabic{*}$'$]
	\setcounter{enumi}{0}
	\item \label{g1z} for some $\gamma>2^*$, we have
	$$-\infty<\displaystyle\liminf_{s\to 0}\frac{g(s)}{|s|^{\gamma-1}}
	\leq\displaystyle\limsup_{s\to 0}\frac{g(s)}{|s|^{\gamma-1}}=0.$$
\end{enumerate}
If the constant $\g$ in the assumption \eqref{g1z} is not greater than $N$, we need also a condition at infinity on $g$. More precisely, we require
\begin{enumerate}[label=(g\arabic{*}$''$), ref=g\arabic{*}$''$]
\setcounter{enumi}{0}
	\item \label{g1zinf}whenever $N\geq \g>2^*$,  $\displaystyle\limsup_{s\to +\infty}g(s)/|s|^{q^*-1}= 0$, for some $q\in \left(\frac{N\g}{N+\g},N\right)$,
\end{enumerate}
where $q^*=\frac{qN}{N-q}$. Observe that, clearly, we have $2^*<\g<q^*$ and
it is easy to see that a pure power non-linearity $g(s)=|s|^{p-2}s$, with $p>2^*$, satisfies assumptions \eqref{g1z} and \eqref{g1zinf}. Therefore we generalize the existence results contained in \cite{BDD}.


We recall that these kinds of hypotheses on $g$ have been introduced for the first time in \cite{BerLions,BerLionsII} for the study of 
\begin{equation}\label{eqBL}
-\Delta u=g(u), \qquad\hbox{ in }\RN,
\end{equation}
where $\gamma=2$.
However, we want to remark that, in contrast to what happens in these previous papers, in our case there is no assumption on the behaviour at infinity of $g$ in the positive mass case or in the zero mass case if, in \eqref{g1z}, $\gamma>N$.
This is a direct consequence of the natural framework associated to \eqref{eq} which has to take in account the condition $|\n u|\le 1$, a.e. in $\RN$: this assures that each function is, actually, bounded. See Section \ref{seff} for more details. 

An intermediate step  for the study of \eqref{eq}, based on an approximation argument, has been widely studied in the literature, e.g. see \cite{AW} and references therein. Indeed by the Taylor expansion of $\frac{1}{\sqrt{1-|u|}}$ to the $k$-th order, we  
arrive at the approximated problem
\begin{equation} \label{eqk}
\Qc(u)\approx -\Delta u-\frac12\Delta_4u -\frac{3}{2\cdot2^2} \Delta_6 u 
-\cdots -\frac{(2k-3)!!}{(k-1)!\cdot2^{k-1}}\Delta_{2k} u =g(u) \quad \hbox{in} \ \RN.
\end{equation}
Note that  \cite{AW} deals precisely with \eqref{eqk}, where $g$ satisfying more restrictive Berestycki-Lions-type assumptions. In  \cite{AW}  (see also the references therein), it is not clear if one can solve \eqref{cong} passing to the limit, as $k\to +\infty$. We would like to mention that some partial results using this approximation process have been obtained only in case of the fixed charges source $\rho$ on the right hand side instead of the nonlinear term $g(u)$, see e.g. \cite{BDP,BDPR,K,K-corr}.
Therefore \eqref{cong} requires a different variational approach presented in this work.

Our main result reads as follows.
\begin{Th}\label{main}
Suppose that $a$ satisfies \eqref{a0}, \eqref{a1} and  $g$ satisfies \eqref{g0} and \eqref{g3}. If, in addition,  \eqref{g1} holds, or $\gamma>N$ and \eqref{g1z} holds, or    $\gamma\leq N$ and  both \eqref{g1z}, \eqref{g1zinf} hold, then there exists a nontrivial radial solution $u$ of \eqref{eq} such that 
\[
\irn A(|\n u|^2)\,dx, \irn a(|\n u|^2)|\n u|^2\,dx, \irn |G(u)|\, dx<+\infty,
\]
where $A(s)=\int_0^s a(t)\,dt$. 
\end{Th}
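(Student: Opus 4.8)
The plan is to recast \eqref{eq} variationally and solve it by a constrained minimization in the spirit of Berestycki--Lions, carried out inside the convex set of admissible functions dictated by the operator. Writing $\T(u)=\irn A(|\n u|^2)\,dx$ and $\V(u)=\irn G(u)\,dx$, weak solutions of \eqref{eq} are formally critical points of $I(u)=\T(u)-\V(u)$, and the correct ambient space is the convex set of radial functions
\[
X=\big\{u\in D^{1,2}(\RN)\ \text{radial}\ :\ |\n u|\le 1\ \text{a.e. in }\RN\big\},
\]
intersected with $L^{\g}(\RN)$ in the positive mass case \eqref{g1}. Two structural facts make the problem tractable and explain why no growth of $g$ at infinity is needed. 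First, since $a$ is continuous, positive on $[0,1)$ and blows up at $1^-$ by \eqref{a1}, its infimum $a_*:=\inf_{[0,1)}a$ is strictly positive, so $A(s)\ge a_* s$ and hence $\T(u)\ge a_*\|\n u\|_2^2$; this yields full $D^{1,2}$-control from the kinetic term. Second, every $u\in X$ is bounded: the radial (Strauss-type) estimate controls $u$ away from the origin in terms of $\|\n u\|_2$, while $|\n u|\le 1$ makes $u$ $1$-Lipschitz and hence controls it near the origin. Consequently $\|u\|_\infty$ is uniformly bounded on sublevel sets of $\T$, and the values of $g$ outside a fixed compact interval are irrelevant.

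First I would exploit this last remark through a truncation of $g$ (as in \cite{BDD}): replacing $g$ outside $[-L,L]$ by a function of subcritical growth that preserves \eqref{g0}, \eqref{g3} and the relevant one of \eqref{g1}, \eqref{g1z}, \eqref{g1zinf}, one obtains an auxiliary problem for which $\V$ is continuous and compact along radial sequences, while the a priori bound $\|u\|_\infty\le L$ will guarantee that a solution of the truncated problem solves the original one. On the truncated problem I would first study the constrained minimum
\[
\rho:=\inf\big\{\T(u)\ :\ u\in X,\ \V(u)=1\big\}.
\]
The constraint set is nonempty: from \eqref{g3} one builds, as in \cite{BerLions}, a radial plateau profile in $X$ (equal to $\xi_0$ on a ball and descending to $0$ with slope one) with $\V>0$, and tunes the profile so that $\V=1$.

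The heart of the argument is the direct method for this constrained problem. Given a minimizing sequence $u_n$, the bound $\T(u_n)\le\rho+1$ gives boundedness in $D^{1,2}$, hence (up to a subsequence) $u_n\weakto u$ in $D^{1,2}$, a.e.\ convergence, and---by the compact embedding of radial functions into the relevant Lebesgue spaces together with the uniform $L^\infty$ bound and a Berestycki--Lions splitting of $G$ into its sign-definite pieces---the convergence $\V(u_n)\to\V(u)=1$, so that $u$ is admissible. Convexity of $s\mapsto a(s)s$ in \eqref{a0} makes the integrand $\xi\mapsto A(|\xi|^2)$ convex, whence $\T$ is weakly lower semicontinuous and $\T(u)\le\liminf\T(u_n)=\rho$; moreover $X$ is convex and strongly closed, thus weakly closed, so $|\n u|\le 1$ a.e.\ is preserved in the limit. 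Therefore $u$ is a minimizer, and the Euler--Lagrange relation reads $-\div\big(a(|\n u|^2)\n u\big)=\t\,g(u)$ for a Lagrange multiplier $\t$, tested against admissible variations; testing against the dilation field $x\cdot\n u$ produces the Pohozaev identity $\irn\big[NA(|\n u|^2)-2a(|\n u|^2)|\n u|^2\big]\,dx=\t N\,\V(u)$.

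Two difficulties, both absent in the semilinear case \eqref{eqBL}, dominate the analysis and I expect them to be the main obstacle. The first is to show that the gradient constraint is inactive, $|\n u|<1$ a.e., so that the minimizer solves a genuine equation and not merely a one-sided variational inequality; this is exactly where \eqref{a1} is used, since approaching $|\n u|=1$ is infinitely costly for $\T$. The second, and the true crux, is the multiplier: because $-\div(a(|\n u|^2)\n u)$ is not positively homogeneous, the map $u\mapsto u(\cdot/\s)$ does not rescale the equation, so---unlike in \cite{BerLions}---one cannot absorb $\t$ into a dilation, and even the sign of $\t$ is delicate, as the Pohozaev density $NA(s)-2a(s)s$ is positive for small $s$ but turns negative as $s\to1^-$. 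My plan to overcome this is to minimize $I$ directly on the Pohozaev manifold $\cP=\{u\in X\setminus\{0\}:\irn[NA(|\n u|^2)-2a(|\n u|^2)|\n u|^2]\,dx=N\V(u)\}$, on which the constraint already encodes $\t=1$: using the same compactness input, one shows that every admissible dilation ray meets $\cP$ exactly once, at a critical point of $\s\mapsto I(u(\cdot/\s))$, and that the multiplier of the $\cP$-constraint vanishes, yielding a true solution. Finally the bound $\|u\|_\infty\le L$ removes the truncation of $g$; the three regimes \eqref{g1}, \eqref{g1z} and the pair \eqref{g1z}--\eqref{g1zinf} enter only through the ambient Lebesgue space and the compactness of $\V$ (with \eqref{g1zinf} furnishing the missing control of the potential when $2^*<\g\le N$), and the finiteness of the three integrals in the statement follows from $\T(u)<\infty$, from testing the equation with $u$ (so that $\irn a(|\n u|^2)|\n u|^2\,dx=\irn g(u)u\,dx<\infty$), and from $\|u\|_\infty<\infty$ together with the behaviour of $G$ at $0$.
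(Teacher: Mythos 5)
The central gap is your treatment of the gradient constraint. Your plan hinges on the claim that the constraint $|\n u|\le 1$ is inactive at a minimizer ``since approaching $|\n u|=1$ is infinitely costly for $\T$'', invoking \eqref{a1}. But \eqref{a1} says that $a=A'$ blows up at $1^-$, not $A$ itself: in the model Born--Infeld case $a(s)=(1-s)^{-1/2}$ one has $A(s)=2\big(1-\sqrt{1-s}\big)\le 2$, so the energy density stays \emph{bounded} up to $|\n u|=1$ and touching the constraint costs nothing. Consequently the direct method in your convex set $X$ may well produce a minimizer with $|\n u|=1$ on a set of positive measure (or with essential supremum of $|\n u|$ equal to $1$), where $\T$ is not differentiable --- the flux $a(|\n u|^2)\n u$ need not even be locally integrable --- so the Euler--Lagrange equation fails and one only obtains a one-sided variational inequality. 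This is exactly the ``lack of regularity on the set $\{|\n u|=1\}$'' that the paper identifies as the main obstruction, and your proposal offers no mechanism to rule it out (this regularity question is delicate even for the electrostatic equation with measure data). The paper circumvents it by truncating the \emph{operator}, not the nonlinearity: $a_\t$ coincides with $a$ on $[0,1-\t]$ and is continued with $q$-power growth, so the truncated functional is smooth on the unconstrained space $\X$ (resp.\ $\X_0$); one solves the truncated problem, obtains bounds uniform in $\t$, and then --- this is where \eqref{a1} genuinely enters --- integrates the radial equation to get a uniform flux bound $|a_\t(|u'_\t|^2)u'_\t|\le C$, which combined with $a(s)\to+\infty$ as $s\to 1^-$ forces $|u'_{\bar\t}|\le 1-\bar\t$ for some $\bar\t$, so the truncation is never active. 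That pointwise ODE estimate is also the only place where $\g\ge 2^*/2$, resp.\ \eqref{g1zinf}, is used; your outline never needs these hypotheses, a telltale sign that the decisive step is missing.

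The second gap is the multiplier. Your fallback --- minimizing on the Pohozaev manifold $\cP$ and asserting that ``every admissible dilation ray meets $\cP$ exactly once'' and that ``the multiplier of the $\cP$-constraint vanishes'' --- is exactly the hard point, not a remark: dilations $u(\cdot/\s)$ with $\s<1$ increase $|\n u|$ and exit $X$, so rays are only half-rays; $g$ is merely continuous, so $\cP$ need not be a $\cC^1$ manifold; and, as you yourself note, $NA(s)-2a(s)s$ changes sign, so without homogeneity there is no monotonicity guaranteeing uniqueness of the intersection or the sign (let alone vanishing) of the multiplier. The paper obtains the Pohozaev structure rigorously by a different device: the Jeanjean/Hirata--Ikoma--Tanaka auxiliary functional $J_\t(\s,u)=I_\t(u(e^{-\s}\cdot))$ for the \emph{truncated, smooth} problem, where Ekeland's principle produces a Palais--Smale sequence satisfying the Pohozaev identity asymptotically, and compactness (strict convexity of $s\mapsto a_\t(s)s$ for a.e.\ convergence of gradients, plus Vitali-type arguments for the nonlinear terms) gives a critical point at the mountain-pass level. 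In short, your two structural observations (automatic $L^\infty$ bounds for radial functions with $|\n u|\le 1$, and the need for Pohozaev-type information) are sound, but the proof as proposed stalls at both of the points where the actual difficulty of the problem lies.
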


We use a truncation argument applied to $a$ similarly as in \cite{BDD} but  due to the lack of scaling of the nonlinearity we use a different variational approach for \eqref{eq}. Inspired by \cite{HIT,jj} (see also \cite{AMP,ADP,CDPS,DMP}), we
will adapt for our problem the method explored considering an auxiliary functional that allows to construct a suitable Palais-Smale sequence, which almost satisfies a Pohozaev type identity. The compactness properties of the general nonlinear term will be investigated similarly as in \cite{MederskiBL,MederskiBL2}, see Sections \ref{se>} and \ref{se0} for more details.

The paper is organized as follows. In Section \ref{seff} we introduce our functional framework and some technical tools. Section \ref{se>} and Section \ref{se0} will be devoted, respectively, to the positive mass case and to the zero mass one and, therein, we will prove our main result.

We conclude this introduction fixing some notations. For any $p\ge 1$, we denote by $L^p(\RN)$ the usual Lebesgue spaces equipped by the standard norm $|\cdot|_{p}$.
In our estimates, we will frequently denote by $C>0$, $c>0$ fixed
constants, that may change from line to line, but are always
independent of the variable under consideration. 
We also use the notation $o_n(1)$ to indicate a quantity which goes to zero as $n\to +\infty$.
Moreover, for any $R>0$, we denote by $B_R$ the ball of $\RN$ centred in the origin with radius $R$.
Finally, if $u$ is a radial function of $\RN$, with an abuse of notation, for any $x\in \RN$, we denote $u(x) = u(r)$, with $r = |x|$. 

\subsection*{Acknowledgements} 
This work has been  partially carried out during a stay of J.M. at Karlsruhe Institute of Technology. This work has been also partially carried out during a stay of A.P. in Poland at Nicolaus Copernicus University in Toruń, and at Institute of Mathematics of the Polish Academy of Sciences in Warsaw. J.M and A.P. would like to express their deep gratitude to these prestigious institutions  for the support and warm hospitality. 

The authors wish to thank Prof. Antonio Azzollini for many inspiring comments and discussions.

\section{Functional framework}\label{seff}

In this section we introduce the functional framework related to \eqref{eq} with some useful continuous and compact embedding properties. Moreover, following \cite{BDD}, we present a truncated problem which will play a crucial role in our arguments.

Take any $q>2$. Let $\X^{2,q}_0$
 be the completion of $\cC_0^{\infty}(\R^N)$ with respect to the following norm
 $$\|u\|_0=\big(|\nabla u|_2^2+|\nabla u|_q^2\big)^{1/2}.$$
Recall that 
\[
\X_0^{2,q} \hookrightarrow L^p(\R^N),\qquad\text{for }
p\in 
\begin{cases}
[2^*,q^*]& \text{if }q<N,
\\
[2^*,+\infty)& \text{if }q=N,
\\
[2^*,+\infty]& \text{if }q>N,
\end{cases}
\]
and, denoting 
$$\X_0:=\X_{0,{\rm rad}}^{2,q}=\big\{u\in \X^{2,q}_0: u \hbox{ radially symmetric}\big\},$$
we have
\[
\X_0 \hookrightarrow \hookrightarrow L^p(\R^N),\qquad\text{for }
p\in 
\begin{cases}
(2^*,q^*)& \text{if }q<N,
\\
(2^*,+\infty)& \text{if }q\geq N,
\end{cases}
\]
see e.g. \cite{BDD,AW}. Moreover,
as in \cite{BDD,S}, we have the following
\begin{Lem}\label{le:strauss}
Let $p\in [2,q]$, if $q<N$, and  $p \in [2,N)$, if $q\ge N$. Then there exists $C > 0$ (depending only on $N$ and $p$) such that for all $u \in \X_0$, there holds
\[
|u(x)| \le C|x|^{-\frac{N-p}p} |\n u|_p, 
\]
for almost every $x\in \RN\setminus \{0\}$.
\end{Lem}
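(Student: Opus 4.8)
The plan is to establish the pointwise bound first for smooth compactly supported radial functions and then to extend it to all of $\X_0$ by density. I would fix $u \in \cC_0^{\infty}(\RN)$ radial, identify it with $u(r)$, $r=|x|$, and use that $u(r)\to 0$ as $r\to+\infty$ to write, via the fundamental theorem of calculus,
\[
|u(r)| = \left|\int_r^{+\infty} u'(s)\,ds\right| \le \int_r^{+\infty}|u'(s)|\,ds.
\]

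Next I would bring in the spherical weight $s^{N-1}$, which carries the dimensional information, by splitting $|u'(s)| = |u'(s)|\,s^{(N-1)/p}\cdot s^{-(N-1)/p}$ and applying H\"older's inequality with exponents $p$ and $p/(p-1)$:
\[
|u(r)| \le \left(\int_r^{+\infty}|u'(s)|^p s^{N-1}\,ds\right)^{1/p}\left(\int_r^{+\infty}s^{-\frac{N-1}{p-1}}\,ds\right)^{\frac{p-1}{p}}.
\]
Since for a radial function $|\n u|_p^p = |\mathbb{S}^{N-1}|\int_0^{+\infty}|u'(s)|^p s^{N-1}\,ds$, the first factor is at most $|\mathbb{S}^{N-1}|^{-1/p}|\n u|_p$. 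The crux is the second factor: the tail integral $\int_r^{+\infty}s^{-(N-1)/(p-1)}\,ds$ converges precisely when $(N-1)/(p-1)>1$, that is, when $p<N$ --- which is exactly what the hypothesis guarantees (in each case $p\le q<N$ or $p<N$ directly). Evaluating it gives $\frac{p-1}{N-p}\,r^{-(N-p)/(p-1)}$, and raising to the power $(p-1)/p$ produces the asserted decay rate $r^{-(N-p)/p}$ together with the constant $C=|\mathbb{S}^{N-1}|^{-1/p}\big(\tfrac{p-1}{N-p}\big)^{(p-1)/p}$, which depends only on $N$ and $p$.

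Finally I would remove the smoothness assumption. Given $u\in\X_0$, I choose radial $u_n\in\cC_0^{\infty}(\RN)$ with $u_n\to u$ in $\|\cdot\|_0$. Interpolating between $L^2$ and $L^q$ shows $\n u_n\to\n u$ in $L^p$ (in every case $2\le p\le q$, so in particular $|\n u_n|_p\to|\n u|_p<+\infty$), while the continuous embedding $\X_0\hookrightarrow L^{2^*}(\RN)$ yields $u_n\to u$ in $L^{2^*}$ and hence, along a subsequence, $u_n\to u$ a.e.\ in $\RN$. Passing to the limit in the bound established for each $u_n$ then gives the estimate for almost every $x\in\RN\setminus\{0\}$. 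The only genuinely delicate point is the integrability threshold $p<N$ in the weight integral; everything else is a routine H\"older estimate together with a density and a.e.-convergence argument, so I expect no serious obstacle beyond keeping track of the constant and justifying the passage to the limit.
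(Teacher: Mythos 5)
Your proof is correct and is essentially the argument the paper relies on: the lemma is stated without proof, deferring to \cite{BDD,S}, and your fundamental-theorem-of-calculus plus H\"older estimate with the weight $s^{(N-1)/p}$ (using $p<N$ for the tail integral), followed by interpolation of the gradient norms and an a.e.\ limit, is precisely that standard radial-lemma proof. The only step you leave implicit --- that a radial $u\in\X_0$ admits approximating \emph{radial} functions in $\cC_0^\infty(\RN)$ --- is standard (average the approximating sequence over rotations, which does not increase the norm $\|\cdot\|_0$), so there is no genuine gap.
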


In the positive mass case we always assume that $q>N$ and let $\X^{2,q,\gamma}$
 be the completion of $\cC_0^{\infty}(\R^N)$ with respect to the following norm
 $$\|u\|=\big(|\nabla u|_2^2+|\nabla u|_q^2+|u|_{\gamma}^2\big)^{1/2}$$
 and, clearly, if $\gamma\geq 2^*$, then $\X^{2,q,\gamma}$ and $\X^{2,q}_0$ coincides. Moreover
$\X^{2,q,\gamma}$ is continuously embedded into $L^p(\R^N)$ for $p\in[\min\{2^*,\gamma\},+\infty]$ and
$$\X:=\X_{\rm rad}^{2,q,\gamma}=\big\{u\in \X^{2,q,\gamma}: u \hbox{ radially symmetric}\big\}$$  
embeds compactly into $L^p(\R^N)$, for $p\in (\min\{2^*,\gamma\},+\infty)$.

Similarly as in  \cite{BDD} for $\Qc$ we introduce a truncated problem.
Let us fix  $\t_1\in (0,1)$. For any $\t\in (0,\t_1]$ we fix $q=q(\t)>N$ such that
\begin{equation}\label{eq:convexA}
q\geq 2\frac{a'(1-\theta)(1-\theta)+a(1-\theta)}{a(1-\theta)}.
\end{equation}
Then we define  a continuous function $a_\t:[0,+\infty)\to \R^+$ by
\[
a_\t(s):=
\begin{cases}
a(s)& \hbox{ if }0\le s\le 1-\t,
\\
(1-\theta)^{-\frac{q-2}{2}} a(1-\theta) s^\frac{q-2}{2} & \hbox{ if } s> 1-\t.
\end{cases}
\]
The functions $a_\t(s)$ and $\vp(s):=a_\t(s)s$ are differentiable in $ [0,+\infty)\setminus\{1-\t\}$ and, by \eqref{eq:convexA} and \eqref{a0}, we deduce that 
$\vp'(s_1)< \vp'_-(1-\t)\le \vp'_+(1-\t)< \vp'(s_2)$, for any $s_1<1-\t<s_2$. 

\begin{Lem}\label{lemConvex}
The map $\vp(s)$ is strictly convex.
\end{Lem}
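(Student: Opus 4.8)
The plan is to verify strict convexity on each of the two pieces separately and then glue them across the corner at $s=1-\t$, where assumption \eqref{eq:convexA} is exactly what is needed. First I would record that $\vp$ is continuous on $[0,+\infty)$: on $[0,1-\t]$ it equals $a(s)s$, on $(1-\t,+\infty)$ it equals $(1-\t)^{-(q-2)/2}a(1-\t)\,s^{q/2}$, and a direct computation shows both expressions take the common value $a(1-\t)(1-\t)$ at $s=1-\t$, so there is no jump.

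On the first interval $[0,1-\t]$, the map $\vp(s)=a(s)s$ is strictly convex directly by \eqref{a0}. On the second interval $(1-\t,+\infty)$, $\vp$ is a positive constant multiple of $s^{q/2}$; since $q>N\ge 3$ forces $q/2>1$, its second derivative $\tfrac{q}{2}\bigl(\tfrac{q}{2}-1\bigr)s^{q/2-2}$ is strictly positive for $s>0$, so $\vp$ is strictly convex there as well. In particular, being the derivative of a $\cC^1$ strictly convex function, $\vp'$ is strictly increasing on each of the two open pieces $(0,1-\t)$ and $(1-\t,+\infty)$.

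The crux is the junction at $s=1-\t$, and this is precisely where \eqref{eq:convexA} enters. I would compute the one-sided derivatives: from the left, $\vp'_-(1-\t)=a'(1-\t)(1-\t)+a(1-\t)$; differentiating the power from the right gives $\vp'_+(1-\t)=\tfrac{q}{2}\,a(1-\t)$. Dividing by $a(1-\t)>0$, the inequality $\vp'_-(1-\t)\le \vp'_+(1-\t)$ is seen to be equivalent to \eqref{eq:convexA}, i.e. the derivative of $\vp$ has a nonnegative jump at the corner.

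Finally I would assemble the global conclusion: the right derivative $\vp'_+$ exists on all of $[0,+\infty)$, is strictly increasing on each open piece, and does not decrease at $s=1-\t$, yielding the chain $\vp'(s_1)<\vp'_-(1-\t)\le \vp'_+(1-\t)<\vp'(s_2)$ for every $s_1<1-\t<s_2$. Since a continuous function whose right derivative is strictly increasing is strictly convex, this gives strict convexity of $\vp$ on the whole half-line. The only genuine obstacle is the correct treatment of the corner; the two smooth pieces are routine, and \eqref{eq:convexA} is engineered exactly so that no concave kink can appear at $s=1-\t$.
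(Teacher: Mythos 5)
Your proof is correct, and it rests on the same two pillars as the paper's: strict convexity of each piece separately, and the observation that \eqref{eq:convexA} is precisely the inequality $\vp'_-(1-\t)=a'(1-\t)(1-\t)+a(1-\t)\le \frac{q}{2}\,a(1-\t)=\vp'_+(1-\t)$ between the one-sided derivatives at the corner; the resulting chain $\vp'(s_1)<\vp'_-(1-\t)\le\vp'_+(1-\t)<\vp'(s_2)$ for $s_1<1-\t<s_2$ appears verbatim in the paper in the paragraph preceding the lemma. The genuine difference is the gluing step. You invoke the general criterion that a continuous function whose right derivative exists and is strictly increasing is strictly convex; this is true and standard, but it is not proved in your sketch, so a complete write-up should cite it or give the short supporting-line argument. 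The paper instead glues by hand: in the proof it verifies the strict midpoint inequality $\frac{\vp(s)+\vp(t)}{2}>\vp\big(\frac{s+t}{2}\big)$ for $s<1-\t<t$ by summing three supporting-line inequalities (two based at $\frac{s+t}{2}$, one based at $1-\t$) and using $\vp'_+(1-\t)\ge\vp'\big(\frac{s+t}{2}\big)$, which is exactly where \eqref{eq:convexA} enters; strict convexity then follows together with continuity. Thus your route is shorter and makes the role of the derivative jump at the corner maximally transparent, at the price of an external real-analysis fact, while the paper's argument is elementary and self-contained. A final small point: you justify strict convexity of the power piece via $q>N\ge 3$, but only $q>2$ is needed --- worth noting since in the zero-mass case with $\g\le N$ the paper works with a truncation exponent $q<N$ (though still $q>2$), and the lemma is used there as well.
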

\begin{proof}
Clearly $\vp$ is strictly convex on $[0,1-\theta]$ and on $[1-\theta,+\infty)$. Take $0<s<1-\theta<t$. If $\frac{s+t}{2}\le 1-\theta$, then by the convexity we obtain
\begin{align*}
\vp(s)-\vp\Big(\frac{s+t}{2}\Big)&> \vp'\Big(\frac{s+t}{2}\Big)\Big(s-\frac{s+t}{2}\Big),\\
\vp(1-\theta)-\vp\Big(\frac{s+t}{2}\Big)&> \vp'\Big(\frac{s+t}{2}\Big)\Big(1-\theta-\frac{s+t}{2}\Big),\\
\vp(t)-\vp(1-\theta)&> \vp'_+(1-\theta)(t-1+\theta).\\
\end{align*}
In view of \eqref{eq:convexA} we get $\vp'_+(1-\theta)\ge \vp'\big(\frac{s+t}{2}\big)$ and we conclude 
$$\frac{\vp(s)+\vp(t)}{2}> \vp\Big(\frac{s+t}{2}\Big).$$
Similarly we argue if $\frac{s+t}{2}>1-\theta$ and we conclude.
\end{proof}

For the positive mass case we will consider the following truncated problem
\begin{equation}
\label{eqTheta}
\left\{
\begin{array}{ll}
-{\rm div}\left(a_\t(|\n u|^2)\n u\right) u  = g(u)&\quad \hbox{in }\R^N,\\
u\in \X.&
\end{array}
\right.
\end{equation}
For the zero mass case, instead, we will consider the following truncated problem
\begin{equation}
\label{eqThetaz}
\left\{
\begin{array}{ll}
-{\rm div}\left(a_\t(|\n u|^2)\n u\right) u  = g(u)&\quad \hbox{in }\R^N,\\
u\in \X_0.&
\end{array}
\right.
\end{equation}
Clearly, if $u_\t$ is a solution of \eqref{eqTheta} or of \eqref{eqThetaz} such that $|\n u_\t|\le 1-\t$, then $u_\t$ is a solution also of \eqref{eq}.

Observe that there exists $\bar c_\t=\bar c_\t(\t)>0$ such that
\begin{align}
\label{a2q}
\bar c \left(s^2+|s|^q\right)
&\le a_\t(s^2)s^2\le \bar c_\t \left(s^2+|s|^q\right), \qquad\hbox{ for all }s\in \R,\\		
\label{A2q}
\bar c \left(s^2+|s|^q\right)
&\le A_\t(s^2)\le \bar  c_\t \left(s^2+|s|^q\right), \hspace{3mm}\qquad\hbox{ for all }s\in \R,
\end{align} 
where $A_\t(s)=\int_0^s a_\t(t)\,dt$ and
$$\bar c:=\frac{2}{q}\cdot\frac{(1-\theta_1)^{\frac{q-2}{2}}}{1+(1-\theta_1)^{q-2}}\cdot \min_{s\in[0,1)}a(s)$$
is independent of $\theta$.

We conclude this section with the following lemma, which is also new for $\Qc$ and which will play a crucial role in our arguments.

\begin{Lem}\label{lem:convgrad}
Suppose that $u_n\weakto u_0$ in $\X_0$ and
	\begin{align}\label{eq:lemmaconv1}
	\lim_{n \to +\infty}\irn a_\t(|\n u_n|^2)|\n u_n|^2\, dx
	=\irn a_\t(|\n u_0|^2)|\n u_0|^2\, dx .
	\end{align}
Then $u_n \to u_0$ strongly in $\X_0$.
\end{Lem}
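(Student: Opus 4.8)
The plan is to exploit the strict convexity of $\vp(s)=a_\t(s)s$ from Lemma \ref{lemConvex}, lifted to the radial map $\Psi(\xi):=\vp(|\xi|^2)$ on $\RN$. First I would record that $\vp$ is nondecreasing: being convex with $\vp(0)=0$, its slopes $\vp(s)/s=a_\t(s)$ are nondecreasing, whence $\vp(s_1)=a_\t(s_1)s_1\le a_\t(s_2)s_2=\vp(s_2)$ for $0<s_1<s_2$. Consequently $\Psi=\vp(|\cdot|^2)$ is a convex nondecreasing function of $|\xi|$, hence convex on $\RN$ (composition of the convex nondecreasing $\vp$ with the convex $\xi\mapsto|\xi|^2$). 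Setting $M(v):=\irn a_\t(|\n v|^2)|\n v|^2\,dx=\irn\Psi(\n v)\,dx$, the bounds \eqref{a2q} make $M$ finite and strongly continuous on $\X_0$, and convexity of $\Psi$ makes $M$ convex, hence weakly sequentially lower semicontinuous.

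Next I would put $w_n:=\tfrac12(u_n+u_0)$, so that $w_n\weakto u_0$. Pointwise convexity gives $\Psi(\n w_n)\le\tfrac12\Psi(\n u_n)+\tfrac12\Psi(\n u_0)$, i.e. $M(w_n)\le\tfrac12 M(u_n)+\tfrac12 M(u_0)$; combined with weak lower semicontinuity, $M(u_0)\le\liminf_n M(w_n)$, and \eqref{eq:lemmaconv1} then forces $M(w_n)\to M(u_0)$. Therefore the nonnegative convexity defects
$$\omega_n:=\tfrac12\Psi(\n u_n)+\tfrac12\Psi(\n u_0)-\Psi(\n w_n)\ge 0$$
satisfy $\irn\omega_n\,dx\to 0$, so $\omega_n\to 0$ in $L^1(\RN)$ and, along a subsequence, $\omega_n\to 0$ a.e.

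Then I would upgrade this to a.e. convergence of the gradients. Fix $x$ with $\omega_n(x)\to 0$ and $|\n u_0(x)|<\infty$, and write $\xi_n:=\n u_n(x)$, $\eta:=\n u_0(x)$. The sequence $\{\xi_n\}$ is bounded: for $|\xi_n|$ large, both $\xi_n$ and $\tfrac12(\xi_n+\eta)$ lie in the region where $a_\t$ is the pure power $c s^{(q-2)/2}$, so $\Psi=c|\cdot|^q$ there and $\omega_n(x)=c|\xi_n|^q\big(\tfrac12-2^{-q}\big)+o(|\xi_n|^q)\to+\infty$, contradicting $\omega_n(x)\to 0$. Any bounded subsequential limit $\xi^*$ satisfies $\tfrac12\Psi(\xi^*)+\tfrac12\Psi(\eta)-\Psi(\tfrac12(\xi^*+\eta))=0$, hence $\xi^*=\eta$ by strict convexity of $\Psi$; thus $\n u_n(x)\to\n u_0(x)$ a.e.

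Finally I would conclude by a Vitali argument. Since $\Psi(\n u_n)\ge 0$, $\Psi(\n u_n)\to\Psi(\n u_0)$ a.e., and $\irn\Psi(\n u_n)\to\irn\Psi(\n u_0)$ by \eqref{eq:lemmaconv1}, the standard fact for nonnegative integrands gives $\Psi(\n u_n)\to\Psi(\n u_0)$ in $L^1(\RN)$; in particular this family is uniformly integrable and tight. The lower bound in \eqref{a2q} yields $|\n u_n|^2\le\bar c^{-1}\Psi(\n u_n)$ and $|\n u_n|^q\le\bar c^{-1}\Psi(\n u_n)$, so $\{|\n u_n|^2\}$ and $\{|\n u_n|^q\}$ inherit uniform integrability and tightness. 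Hence $\{|\n u_n-\n u_0|^2\}$ and $\{|\n u_n-\n u_0|^q\}$ are uniformly integrable, tight, and converge to $0$ a.e., so Vitali's theorem gives $|\n u_n-\n u_0|_2\to 0$ and $|\n u_n-\n u_0|_q\to 0$, i.e. $u_n\to u_0$ in $\X_0$ along the subsequence; as the limit is the fixed $u_0$, the usual subsequence principle upgrades this to the full sequence. I expect the delicate point to be the pointwise step, namely ruling out escape of $\xi_n$ to infinity, which genuinely uses the pure-power form of $a_\t$ beyond $1-\t$ and not merely the two-sided bounds \eqref{a2q}.
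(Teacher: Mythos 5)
Your proof is correct, and it shares the paper's starting point: both arguments integrate the convexity defect $\tfrac12\Psi(\nabla u_n)+\tfrac12\Psi(\nabla u_0)-\Psi\bigl(\tfrac12(\nabla u_n+\nabla u_0)\bigr)\ge 0$ and use weak lower semicontinuity together with \eqref{eq:lemmaconv1} to show its integral vanishes. After that, however, you diverge from the paper in both remaining steps. To obtain $\nabla u_n\to\nabla u_0$ a.e., the paper never passes to pointwise convergence of the defect; it introduces the quantitative moduli $\mu_k>0$ (infima of the defect over the compact sets $\{|v_1|,|v_2|\le k,\ |v_1-v_2|\ge 1/k\}$), deduces that the measure of the bad sets $\Omega_{n,k}$ tends to zero, and extracts a diagonal subsequence; this uses only strict convexity. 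Your pointwise route instead must rule out escape of $\nabla u_n(x)$ to infinity, which you do correctly via the exact $q$-power form of $a_\t$ beyond $1-\theta$, and you rightly observe that the two-sided bounds \eqref{a2q} alone would not suffice there because the constants $\bar c$ and $\bar c_\t$ differ. For the final upgrade to strong convergence, the paper writes $\int_{\R^N}\vp(\nabla u_n)\,dx-\int_{\R^N}\vp(\nabla u_n-\nabla u_0)\,dx$ as the line integral $\int_0^1\int_{\R^N}\vp'(\nabla u_n-s\nabla u_0)\nabla u_0\,dx\,ds$ and applies Vitali to that family, concluding $\int_{\R^N}\vp(\nabla u_n-\nabla u_0)\,dx\to 0$ and then invoking \eqref{a2q}; your route---Scheff\'e's lemma applied to $\Psi(\nabla u_n)\ge 0$, transfer of uniform integrability and tightness to $|\nabla u_n|^2$ and $|\nabla u_n|^q$ through the lower bound in \eqref{a2q}, then Vitali applied directly to $|\nabla u_n-\nabla u_0|^2$ and $|\nabla u_n-\nabla u_0|^q$---is more elementary, since it avoids differentiating $\vp$ (which for the truncated $a_\t$ exists only off the sphere $|v|^2=1-\theta$) and the attendant integrability check for $\{\vp'(\nabla u_n-s\nabla u_0)\nabla u_0\}$. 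Your closing remark on the subsequence principle is also needed, and is in fact left implicit in the paper, since both arguments pass to subsequences along the way.
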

\begin{proof}
Let $\vp:\R^N\to \R$ be given by $\vp(v):=a_\t(|v|^2)|v|^2$, for $v\in \R^N$. By Lemma \ref{lemConvex}, $\vp$ is strictly convex, hence
 the map $\Phi:\X_0 \to \R$, such that 
\[
\Phi(u):=\int_{\R^N}\vp(\nabla u)\, dx, \qquad\hbox{ for }u\in \X_0,
\]
is well defined and strictly convex as well. So, since $\frac12 (\nabla u_n+\nabla u_0)\weakto \nabla u_0$, we obtain
\begin{equation}\label{liminf}
\liminf_{n\to+\infty}\int_{\R^N}\vp\Big(\frac12 (\nabla u_n+\nabla u_0)\Big)\, dx\geq \int_{\R^N}\vp(\nabla u_0)\, dx.
\end{equation}
Then, taking into account the convexity of $\vp$, we know that, a.e. in $\RN$,
$$\xi_n:=\frac12\big(\vp(\nabla u_n)+\vp(\nabla u_0)\big)-\vp\Big(\frac12 (\nabla u_n+\nabla u_0)\Big)\geq 0,$$
hence, by \eqref{eq:lemmaconv1} and \eqref{liminf},
\begin{equation}\label{eq:limxi}
\limsup_{n\to+\infty}\int_{\R^N}\xi_n\, dx=0.
\end{equation}
For any $k\geq 1$ we define
\begin{align*}
\mu_k&:=\inf\left\{\frac12\big(\vp(v_1)+\vp(v_2)\big)-\vp\Big(\frac12 (v_1+v_2)\Big): v_1,v_2\in\R^N\hbox{ s.t. }|v_1|, |v_2|\leq k, |v_1-v_2|\geq \frac1k\right\},\\
\Omega_{n,k}&:=\left\{x\in\R^N: |\nabla u_n|, |\nabla u_0|\leq k, |\nabla u_n-\nabla u_0|\geq \frac1k\right\}.
\end{align*}
Since $\mu_k>0$, by the strict convexity of $\vp$, and \eqref{eq:limxi} holds, we infer that the Lebesgue measure $|\Omega_{n,k}|\to 0$, as $n\to +\infty$. Take any $\eps>0$, we find a subsequence $\{n_k\}$ such that
$|\bigcup_{k=1}^{\infty} \Omega_{n_k,k}|<\eps$. Again letting $\eps\to0$ and passing to a subsequence we obtain that $\nabla u_n\to \nabla u_0$ a.e. on $\R^N$. Note that $a_\theta$ is of class $\cC^1$ on $(0,1-\theta)$ and $(1-\theta,+\infty)$, hence $\vp'$ exists almost everywhere. Now take $s\in [0,1]$, by \eqref{a2q} we observe that the sequence
$\{\vp'(\nabla u_n-s\nabla u_0)\nabla u_0\}$ is uniformly integrable and tight and converges a.e. to $\vp'\big((1-s)\nabla u_0\big)\nabla u_0$. In view of the Vitali Convergence Theorem we get 
\begin{align*}
\int_{\R^N}\vp(\nabla u_n)\,dx-\int_{\R^N}\vp(\nabla u_n-\nabla u_0)\,dx&=\int_0^1\int_{\R^N}\vp'(\nabla u_n-s\nabla u_0)\nabla u_0\,dx \,ds
\\
&\xrightarrow[n\to +\infty]{} \int_0^1\int_{\R^N}\vp'\big((1-s)\nabla u_0\big)\nabla u_0\,dx \,ds
\\
&=\int_{\R^N}\vp(\nabla u_0)\,dx.
 \end{align*}
Since \eqref{eq:lemmaconv1} holds, we get 
$$\int_{\R^N}\vp(\nabla u_n-\nabla u_0)\,dx\to 0,$$
as $n\to +\infty$, and by \eqref{a2q} we conclude.
\end{proof}

\section{The positive mass case}\label{se>}

In this section we deal with the positive mass case, namely, we will assume on $g$ \eqref{g0}, \eqref{g1} and \eqref{g3}.

Let 
$g_1(s):=\max\{g(s)+ms^{\gamma-1},0\}$, for $s\geq 0$, 
and $g_2(s)=g_1(s)-g(s)$, for $s\geq 0$, and $g_i(s)=-g_i(-s)$ for $s<0$.
Then $g_1(s),g_2(s)\geq 0$, for $s\geq 0$,
\begin{align}\label{eq:NewCond1}
\lim_{s\to 0} g_1(s)/s^{\gamma-1}&=0, 
\\
g_2(s)&\geq m s^{\gamma-1}, \quad \hbox{ for }s\geq 0.\label{eq:NewCond2}
\end{align}
If we set
$$G_i(s)=\int_0^s g_i(t)\, dt,\quad \hbox{ for }i=1,2,$$ 
then by \eqref{eq:NewCond2} 
we have
    \begin{equation}
         G_2(s) \ge \frac m{\g} |s|^{\g},\quad  \hbox{ for } s\in\R.\label{eq:G2}
    \end{equation}

By \eqref{g1} and \eqref{eq:NewCond1}, we have that there exist two fixed positive constants, $\bar c_1,\bar c_2$ such that 
\begin{align}
&|g(s)|\le \bar c_1 |s|^{\g-1}, & \hbox{for all }|s|\le \bar c_2, \label{gbarc}
\\
&|G(s)|\le \bar c_1 |s|^{\g}, & \hbox{for all }|s|\le \bar c_2,
\label{Gbarc}
\\
&|g_1(s)|\le \bar c_1 |s|^{\g-1}, & \hbox{for all }|s|\le \bar c_2, \label{g1barc}
\\
&|G_1(s)|\le \bar c_1 |s|^{\g}, & \hbox{for all }|s|\le \bar c_2.
\label{G1barc}
\end{align}

\begin{Lem}\label{bendef}
For any $u\in \X$, $\irn G(u)\, dx$ and $\irn g(u)u \, dx$ are well defined. The same is true for $\irn G_i(u)\, dx$ and $\irn g_i(u)u \, dx$, for $1=1,2$.
\end{Lem}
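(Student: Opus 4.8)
The plan is to exploit the fact that in the positive mass case we always take $q>N$, so that by the continuous embeddings stated in Section \ref{seff} every $u\in \X$ lies in $L^\gamma(\RN)$ (directly from the definition of the norm $\|\cdot\|$) and, crucially, in $L^\infty(\RN)$ (the endpoint $p=+\infty$ is included precisely because $q>N$). This $L^\infty$-bound is what replaces any growth assumption on $g$ at infinity: since $u$ is bounded, one only needs the behaviour of $g,G$ near $0$, encoded in \eqref{gbarc}--\eqref{G1barc}, together with the mere continuity of $g$ and $G$ on the compact range of $u$.

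First I would fix $u\in\X$, put $M:=|u|_\infty<+\infty$, and split $\RN$ into the measurable sets $\{|u|\le \bar c_2\}$ and $\{|u|>\bar c_2\}$. On $\{|u|\le \bar c_2\}$ the near-zero estimates give $|G(u)|\le \bar c_1|u|^\gamma$ and $|g(u)u|\le \bar c_1|u|^{\gamma-1}|u|=\bar c_1|u|^\gamma$, with the analogous bounds for $G_1,g_1$ coming from \eqref{g1barc}--\eqref{G1barc}; since $u\in L^\gamma(\RN)$, the corresponding integrals over this set are finite and controlled by $\bar c_1|u|_\gamma^\gamma$. On the complementary set I would invoke Chebyshev's inequality,
\[
\big|\{|u|>\bar c_2\}\big|\le \bar c_2^{-\gamma}\irn|u|^\gamma\,dx=\bar c_2^{-\gamma}|u|_\gamma^\gamma<+\infty,
\]
so that $\{|u|>\bar c_2\}$ has finite Lebesgue measure; there $u(x)$ takes values in the compact set $\{\bar c_2\le |s|\le M\}$, on which the continuous functions $g(s)s$, $G(s)$, $g_1(s)s$, $G_1(s)$ are bounded by a constant depending only on $M$, and multiplying by the finite measure of the set yields finiteness. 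Combining the two regions shows that $\irn G(u)\,dx$, $\irn g(u)u\,dx$ and their $i=1$ counterparts are well defined; for $i=2$ I would simply use the defining relations $g_2=g_1-g$ and $G_2=G_1-G$ (extended oddly for $s<0$), so that the remaining integrals are well defined as differences of already-finite ones.

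The only genuinely substantive point is the opening observation: that $q>N$ forces $u\in L^\infty(\RN)$, which is exactly why the absence of any growth control on $g$ at infinity is harmless — a function in $\X$ never visits large values, so the global behaviour of $g$ is irrelevant and only its near-zero growth \eqref{gbarc}--\eqref{G1barc} plus continuity enter. Once this is recognised, the argument reduces to the elementary two-region splitting above, and I expect no further obstacle.
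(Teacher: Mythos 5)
Your proposal is correct and follows essentially the same argument as the paper: both split $\RN$ into $\{|u|\le \bar c_2\}$ and $\{|u|>\bar c_2\}$, use the near-zero bounds \eqref{gbarc}--\eqref{G1barc} together with $u\in L^\gamma(\RN)$ on the first set, and on the second set bound the continuous integrand on the compact range of $u$ (coming from $\X\hookrightarrow L^\infty(\RN)$, valid since $q>N$) times the finite measure of that set. Your explicit Chebyshev estimate for ${\rm meas}\{|u|>\bar c_2\}$ and your treatment of the $i=2$ case via $g_2=g_1-g$, $G_2=G_1-G$ only make explicit what the paper leaves implicit in its closing sentence.
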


\begin{proof}
Let $u\in \X$. Since $\X$ is embedded into $L^\g(\RN)\cap L^\infty(\RN)$, we have that 
\begin{align*}
\irn |G(u)|\, dx &=\int_{\{|u|\le \bar c_2\}}|G(u)|\, dx 
+\int_{\{|u|> \bar c_2\}} |G(u)|\, dx
\\
&\le \bar c_1\int_{\{|u|\le \bar c_2\}}|u|^{\g}\, dx 
+{\rm meas}\{|u|> \bar c_2\}\cdot \max_{\{s\le \|u\|_\infty\}}|G(s)|
\\
&\le \bar c_1|u|_{\g}^{\g}
+{\rm meas}\{|u|> \bar c_2\}\cdot \max_{\{s\le \|u\|_\infty\}}|G(s)|<+\infty.
\end{align*}
The arguments are similar for $\irn g(u)u \, dx$, $\irn G_i(u)\, dx$ and $\irn g_i(u)u \, dx$, $1=1,2$.
\end{proof}

\begin{Lem}\label{le:convG}
If $u_n\weakto u_0$ in $\X$, then 
\begin{equation}\label{conv-g1}
\lim_n\int_{\R^N}g_1(u_n)u_n\, dx= \int_{\R^N}g_1(u_0)u_0\, dx
\end{equation}
and 
\begin{equation}\label{conv-g2}
\lim_n \int_{\R^N}G_1(u_n)\, dx= \int_{\R^N}G_1(u_0)\, dx.
\end{equation}
\end{Lem}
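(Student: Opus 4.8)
The plan is to prove both limits \eqref{conv-g1} and \eqref{conv-g2} by one and the same scheme, since they both amount to the sequential weak continuity on $\X$ of the functional $u\mapsto \irn h(u)\,dx$, for the two choices $h(s)=g_1(s)s$ and $h(s)=G_1(s)$. Both of these $h$ are continuous, satisfy $h(s)=o(|s|^\g)$ as $s\to 0$, and obey $|h(s)|\le \bar c_1|s|^\g$ for $|s|\le \bar c_2$: the behaviour at $0$ comes from \eqref{eq:NewCond1} (and, for $G_1$, from integrating \eqref{eq:NewCond1}, which gives $G_1(s)=o(|s|^\g)$), while the small-value bound is \eqref{g1barc} and \eqref{G1barc}. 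So it suffices to prove a single compactness statement for a generic such $h$.

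First I would record the uniform bounds. From $u_n\weakto u_0$ the sequence $\{u_n\}$ is bounded in $\X$, and since $\X$ embeds continuously into $L^\infty(\RN)\cap L^\g(\RN)$ we get $|u_n|_\infty\le M$ and $|u_n|_\g\le M$ for all $n$, with the same bounds for $u_0$. Next, since the elements of $\X$ are radial with gradient in $L^2\cap L^q$, the Strauss-type estimate of Lemma \ref{le:strauss} applies (with $p=2\in[2,N)$, as $q>N$) and yields $|u_n(x)|\le C|x|^{-(N-2)/2}|\n u_n|_2$, hence the uniform decay $\sup_n\sup_{|x|\ge R}|u_n(x)|\to 0$ as $R\to+\infty$. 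Now fix $\eps>0$. Using $h(s)=o(|s|^\g)$ choose $\delta>0$ with $|h(s)|\le\eps|s|^\g$ for $|s|\le\delta$, and using the uniform decay choose $R>0$ with $|u_n(x)|\le\delta$ for all $n$ and all $|x|\ge R$. Then
\[
\int_{|x|\ge R}|h(u_n)|\,dx\le \eps\int_{|x|\ge R}|u_n|^\g\,dx\le \eps\, |u_n|_\g^\g\le \eps\, M^\g,
\]
uniformly in $n$, and the same estimate holds for $u_0$; this controls the tails.

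It then remains to deal with the ball $B_R$. There $\{u_n\}$ is bounded in $H^1(B_R)\cap W^{1,q}(B_R)$, so by Rellich--Kondrachov and passing to a subsequence $u_n\to u_0$ a.e. on $B_R$; combining this with $|u_n|_\infty\le M$, the continuity of $h$, and the domination $|h(u_n)|\le \max_{|s|\le M}|h(s)|$ on the finite-measure set $B_R$, dominated convergence gives $\int_{B_R}h(u_n)\,dx\to\int_{B_R}h(u_0)\,dx$. Adding the tail estimate yields $\ls_{n}\big|\irn h(u_n)\,dx-\irn h(u_0)\,dx\big|\le 2\eps M^\g$, and letting $\eps\to0$ proves the assertion along that subsequence. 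Since the limiting value $\irn h(u_0)\,dx$ is the same for every subsequence, the subsequence principle upgrades the conclusion to convergence of the whole sequence.

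The one genuinely delicate point is the uniform smallness of the tail: it is precisely the subcriticality \eqref{eq:NewCond1} of $g_1$ at the origin, combined with the uniform radial decay furnished by Lemma \ref{le:strauss}, that forces the contribution of $\{|x|\ge R\}$ to be uniformly negligible. I would emphasise that this is exactly the feature which the ``mass'' component $g_2(s)\ge m|s|^{\g-1}$ does not possess, and which explains why the statement concerns only $g_1$ and $G_1$; on the bounded region $B_R$ everything reduces to a routine dominated-convergence argument.
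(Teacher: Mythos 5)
Your proof is correct, but it takes a genuinely different route from the paper's. The paper (following \cite[Corollary 3.6]{MederskiBL}) decomposes $g_1(u_n)u_n=g_1(u_n)(u_n-u_0)+g_1(u_n)u_0$ and works at the level of function--space embeddings: its Step 1 kills $\irn g_1(u_n)(u_n-u_0)\,dx$ by splitting the range of $g_1$ ($|g_1(s)|\le\eps|s|^{\g-1}$ near $0$ and $|g_1(s)|\le c_\eps|s|^{\beta-1}$ on $(\delta,M]$ for some $\beta>2^*$) and invoking the compact embedding $\X\hookrightarrow\hookrightarrow L^{\beta}(\RN)$; its Step 2 treats $\irn g_1(u_n)u_0\,dx$ by the Vitali convergence theorem; Step 3 assembles the two, and \eqref{conv-g2} is declared ``similar.'' You instead split \emph{physical space}: the uniform radial decay from Lemma \ref{le:strauss} (with $p=2$) combined with $h(s)=o(|s|^{\g})$ at the origin makes the tail $\{|x|\ge R\}$ uniformly negligible, while on $B_R$ local Rellich--Kondrachov plus dominated convergence (using the $L^\infty$ bound) finishes, and the subsequence principle upgrades to the full sequence. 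What each buys: your scheme handles $g_1(u)u$ and $G_1(u)$ in one stroke and is more self-contained, since it uses the Strauss decay directly rather than through the compact-embedding theorem (whose standard proof it essentially reproduces); the paper's scheme is the one that transfers with minimal changes to the zero mass case of Lemma \ref{le:convGz}, where $q$ may be smaller than $N$, there is no $L^\infty$ embedding, and the missing uniform bound is replaced by the growth condition \eqref{g1zinf} at infinity --- your dominated-convergence step on $B_R$ would need to be replaced by a Vitali-type argument there.

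One small repair to your write-up: as written, the a.e.-convergent subsequence on $B_R$ is extracted \emph{after} fixing $\eps$ (hence $R$), so ``letting $\eps\to0$ along that subsequence'' is not licit, because a smaller $\eps$ forces a larger ball on which that subsequence need not converge a.e. The standard fix: start from an arbitrary subsequence, extract (diagonally over the balls $B_k$, $k\in\N$) a further subsequence with $u_n\to u_0$ a.e.\ on all of $\RN$; for this single subsequence your estimate holds for every $\eps>0$, so the integrals converge along it to $\irn h(u_0)\,dx$, and then the subsequence principle applies exactly as you state.
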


\begin{proof}
Here we follow some ideas of \cite[Corollary 3.6]{MederskiBL} (cf. \cite{MederskiBL2}) and we divide the proof into two intermediate steps by which the conclusion follows immediately.
\\
{\sc Step 1}: We claim that
\begin{equation}\label{step1}
\lim_{n} \int_{\R^N}g_1(u_n)(u_n-u_0)\,dx
=0.
\end{equation}
Since $\{u_n\}$ is bounded in $\X$ then, by the continuous embedding of $\X$ into $L^\infty(\RN)$, we infer that there exists $M>0$ such that $|u_n|_\infty\le M$, for any $n\ge 1$. 
Take any $\eps>0$ and $\beta>2^*$. Then, by \eqref{eq:NewCond1}, we find $0<\delta<M$ and $c_\eps>0$ such that 
\begin{align*}
&|g_1(s)|\leq \eps |s|^{\g-1}
\quad\hbox{ if }|s|\in [0,\delta],
\\
&|g_1(s)|\leq c_\eps |s|^{\beta-1}
\quad\hbox{ if }|s|\in (\delta,M].
\end{align*}
Therefore
\begin{align*}
\int_{\R^N}|g_1(u_n)(u_n-u_0)|\, dx
&\leq \eps \int_{\R^N}|u_n|^{\g-1}|u_n-u_0| \,dx
+c_\eps \int_{\R^N}|u_n|^{\beta-1}|u_n-u_0|\, dx,
\end{align*}
and, by the compact embedding of $\X$ into $L^\beta(\RN)$, the boundedness of the sequence $\{u_n\}$ in $\X$, we infer that
$$\limsup_n \int_{\R^N}|g_1(u_n)(u_n-u_0)|\, dx\leq \eps C$$
for some constant $C>0$ and so \eqref{step1} is proved.
\\
{\sc Step 2}: We claim that
\begin{equation*}\label{step2}
\lim_{n} \int_{\R^N}g_1(u_n)u_0\,dx
=\int_{\R^N}g_1(u_0)u_0\,dx.
\end{equation*}
Since the sequence $\{g_1(u_n)u_0\}$ is uniformly integrable and tight, then the conclusion follows by Vitali Convergence Theorem.
\\
{\sc Step 3}: We claim that
\begin{equation*}\label{step3}
\lim_{n}\left(\int_{\R^N}g_1(u_n)u_n\, dx-\int_{\R^N}g_1(u_n)(u_n-u_0)\, dx\right)= \int_{\R^N}g_1(u_0)u_0\, dx.
\end{equation*}
Indeed, if we set $\phi_n(s)=g_1(u_n)(u_n-su_0)$, for any $n\in \N$ and $s\in [0,1]$,  taking in account Step 2,  we have
\begin{align*}
&\lim_{n}\left(\int_{\R^N}g_1(u_n)u_n\, dx-\int_{\R^N}g_1(u_n)(u_n-u_0)\, dx\right)
\\
&\qquad=\lim_{n}\irn \big(\phi_n(0)-\phi_n(1)\big) dx
=-\lim_{n}\irn \left(\int_0^1\phi_n'(s)\, ds\right)dx
\\
&\qquad =\int_0^1\left(\lim_{n}\irn g_1(u_n)u_0\, dx\right)ds
=\int_0^1\left(\irn g_1(u_0)u_0\, dx\right)ds
=-\int_0^1\left(\irn\phi_0'(s)\, dx\right)ds
\\
&\qquad=\irn \big(\phi_0(0)-\phi_0(1)\big) dx
=\irn g_1(u_0)u_0\, dx. 
\end{align*} 
The proof of \eqref{conv-g2} is similar.
\end{proof}

%
%

Solutions of \eqref{eqTheta} will be found as critical points of the functional $I_\t:\X\to \R$ defined as 
\[
I_\t(u)=\frac12 \irn A_\t(|\n u|^2)\, dx+\irn G_2(u)\, dx-\irn G_1(u)\, dx.
\]
The functional is well defined in $\X$ by \eqref{A2q}.

\begin{Lem} \label{PM1} 
For any $\t\in (0,\t_1]$, the functional $I_\t:\X \to \mathbb{R}$ verifies the mountain pass geometry. More precisely:
\begin{itemize}
\item[(i)] there are $\alpha, \rho>0$ such that
$I_\t(u) \geq \alpha$, for $\|u\|=\rho$;	
\item[(ii)] there is $\bar u \in \X \setminus\{0\}$, independent of $\t\in (0,\t_1]$, with $\|\bar u \|>\rho$ and $|\n \bar u|<1-\t_1$, almost everywhere in $\RN$, and such that $I_\t(\bar u)<0$.
\end{itemize}
\end{Lem}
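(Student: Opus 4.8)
The plan is to read $I_\t$ as the truncated version of the original energy: since $g=g_1-g_2$ gives $G=G_1-G_2$, we have $I_\t(u)=\frac12\irn A_\t(|\n u|^2)\,dx-\irn G(u)\,dx$, and I would prove the two-sided mountain pass geometry in the classical Berestycki--Lions spirit. Near the origin the leading gradient term, together with the favorable sign of $G_2$, keeps $I_\t$ bounded below away from $0$; far out a suitably scaled plateau function drives $I_\t$ to $-\infty$. The quantitative inputs are the coercivity estimate \eqref{A2q} for the gradient part, the lower bound \eqref{eq:G2} for $G_2$, and the vanishing \eqref{eq:NewCond1} of $g_1$ at $0$ controlling the defocusing remainder $G_1$.

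For (i), I would first fix $\eps\in(0,m)$ and use \eqref{eq:NewCond1} to pick $\delta>0$ with $|g_1(s)|\le\eps|s|^{\g-1}$, hence $|G_1(s)|\le\frac\eps\g|s|^\g$, for $|s|\le\delta$. Since in the positive mass case $\X$ embeds continuously into $L^\infty(\RN)$, for $\|u\|=\rho$ with $\rho$ small enough one has $|u|_\infty\le\delta$, so that, combining with \eqref{eq:G2},
\[
\irn G_2(u)\,dx-\irn G_1(u)\,dx\ \ge\ \frac{m-\eps}{\g}\,|u|_\g^\g\ \ge\ 0.
\]
For the gradient term, \eqref{A2q} gives $\frac12\irn A_\t(|\n u|^2)\,dx\ge\frac{\bar c}{2}\big(|\n u|_2^2+|\n u|_q^q\big)$. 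On the sphere $\|u\|^2=|\n u|_2^2+|\n u|_q^2+|u|_\g^2=\rho^2$ at least one of the three summands is $\ge\rho^2/3$, and I would split accordingly: if $|\n u|_2^2\ge\rho^2/3$ the first gradient term alone bounds $I_\t$ below; if $|\n u|_q^2\ge\rho^2/3$ then $|\n u|_q^q\ge(\rho/\sqrt3)^q$ does; and if $|u|_\g^2\ge\rho^2/3$ then $\frac{m-\eps}{\g}|u|_\g^\g\ge\frac{m-\eps}{\g}(\rho/\sqrt3)^\g$ does. Taking $\alpha$ to be the minimum of the three resulting positive constants yields $I_\t(u)\ge\alpha>0$.

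For (ii), I would run the standard Berestycki--Lions test-function argument, adapted so as to respect both the gradient truncation and the required $\t$-independence. Using \eqref{g3}, fix $\xi_0>0$ with $G(\xi_0)>0$ and a width $L>\xi_0/(1-\t_1)$, and define the radial Lipschitz function $\bar u_R$ equal to $\xi_0$ on $B_R$, decreasing affinely from $\xi_0$ to $0$ on $B_{R+L}\setminus B_R$, and $0$ outside $B_{R+L}$. Then $|\n\bar u_R|\in\{0,\xi_0/L\}$, so $|\n\bar u_R|<1-\t_1$ almost everywhere; in particular $|\n\bar u_R|^2<1-\t_1\le 1-\t$, whence $A_\t(|\n\bar u_R|^2)=A(|\n\bar u_R|^2)$ and $I_\t(\bar u_R)$ does not depend on $\t$. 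A direct computation gives
\[
I_\t(\bar u_R)=\tfrac12 A\big(\xi_0^2/L^2\big)\,\big(|B_{R+L}|-|B_R|\big)-G(\xi_0)\,|B_R|-\int_{B_{R+L}\setminus B_R}G(\bar u_R)\,dx,
\]
whose first and third terms are $O(R^{N-1})$ while the middle one equals $-G(\xi_0)|B_1|R^N\to-\infty$. Choosing $R$ large (independently of $\t$) so that $I_\t(\bar u_R)<0$ and, since $\|\bar u_R\|\ge|\bar u_R|_\g\ge\xi_0|B_1|^{1/\g}R^{N/\g}\to+\infty$, also $\|\bar u_R\|>\rho$, I would set $\bar u:=\bar u_R$.

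The delicate point is part (i): because the norm of $\X$ weights $|u|_\g$ and $|\n u|_q$ with exponent $2$ while the natural lower bounds coming from $A_\t$ and $G_2$ carry the exponents $q$ and $\g$ (and $\g$ may be smaller or larger than $2$), one cannot simply estimate $I_\t$ from below by a multiple of $\|u\|^2$; the three-way case split on the sphere is exactly what circumvents this mismatch. In part (ii) the only subtlety is to guarantee simultaneously $|\n\bar u|<1-\t_1$ and a strictly positive bulk energy, which the plateau-of-height-$\xi_0$ construction with a sufficiently wide transition layer delivers, and it is precisely this gradient smallness that makes $\bar u$ and the value $I_\t(\bar u)$ independent of $\t$.
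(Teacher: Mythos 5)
Your proposal is correct and follows essentially the same route as the paper: in (i) you use the $L^\infty$-embedding of $\X$ to make $G_1$ subordinate to $G_2$ via \eqref{eq:NewCond1} and \eqref{eq:G2}, together with the coercivity \eqref{A2q}, and your three-way split on the sphere $\|u\|=\rho$ is just an explicit justification of the bound the paper asserts directly, namely $I_\t(u)\ge c\|u\|^{\b}$ with $\b=\max\{2,q,\g\}$. In (ii) you use the same Berestycki--Lions plateau construction with gradient kept below $1-\t_1$ (which is exactly what makes $I_\t(\bar u)$ independent of $\t$); the only cosmetic difference is that the paper takes an annulus of width $\sqrt{R}$ and then dilates $u_R(\cdot/t)$ with $t$ large to force negativity, while you fix the annulus width $L$ and let $R\to+\infty$ directly.
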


\begin{proof}
(i) 
By the continuous embedding of $\X$ into $L^\infty(\RN)$, and by \eqref{eq:NewCond1}, we can consider $\rho>0$ sufficiently small such that
\begin{equation*}
G_1(u(x))\le \frac m{2\g} |u(x)|^{\g}, \qquad \hbox{a.e. $x\in \RN$ and for any $u\in \X$ with $\|u\|=\rho$.}
\end{equation*} 
Hence, by \eqref{eq:G2} and \eqref{A2q}, for any $u\in \X$ with $\|u\|=\rho$, we have
    \begin{align*}
        I_\t(u) & \ge \frac{\bar c} 2 \left(|\n u|^2_2 + |\n u|^q_q\right)
        +  \frac m{2\g} |u|^{\g}_{\g}
\ge c\|u\|^\b\ge \a>0,
    \end{align*}
where $\b=\max\{2,q,\g\}$.
\\
(ii) Let $u_R\in \X$ such that, for any $x\in \RN$,
\[
u_R(x):=
\begin{cases}
\xi_0 & \hbox{in }B_R,
\\
-\frac{\xi_0}{\sqrt{R}}|x| +\xi_0 (1+\sqrt{R})& \hbox{in }B_{R+\sqrt{R}}\setminus B_R,
\\
0& \hbox{in }\RN \setminus B_{R+\sqrt{R}}.
\end{cases}
\]
Arguing as in \cite{BerLions}, for $R$ sufficiently large, we have $\irn G(u_R)\, dx>0$ and, clearly,  $|\n u_R|<1-\t_1$. Moreover, for any $t>1$, we have also that $|\n u_R(\cdot/t)|\le 1-\t_1$ and so, denoting $\bar u =u_R(\cdot/t)$, with $R$ and $t$ sufficiently large and independently by $\t\in (0,\t_1]$, we have $\|\bar u\|> \rho$ and 
\[
I_\t(\bar u)\le c_1\left(t^{N-2}|\n u_R|^2_2+t^{N-q}|\n u_R|^q_q\right)
-t^N \irn G(u_R)\, dx<0.
\]
\end{proof}

Let us define the mountain pass level for the functional $I_\t$
\[
m_\t:=\inf_{\g\in \Gamma}\max_{t\in [0,1]}I_\t(\g(t)), 
\]
where 
\[
\Gamma:=\{\g\in \cC([0,1],\X)\mid \g(0)=0, \g(1)=\bar u\}.
\]
By Lemma \ref{PM1}, we deduce that $m_\t\ge \a$, for any $\t\in (0,\t_1]$.

Observe that, since $|\n \bar u|<1-\t_1$, we have that $I_{\t_1}(t \bar u)=I_\t(t\bar u)$, for any $t\in [0,1]$ and for any $\t\in (0,\t_1]$. Hence we deduce that
\begin{equation*}\label{mtheta}
m_\t\le \max_{t \in [0,1]}I_\t(t\bar u)
=\max_{t \in [0,1]}I_{\t_1}(t\bar u),
\end{equation*}
for any $\t\in (0,\t_1]$. Hence there exists $c>0$ (independent of  $\t\in (0,\t_1]$) such that 
\begin{equation}\label{mtbdd}
0< m_\t \le c, \qquad \hbox{for any $\t\in (0,\t_1]$.}
\end{equation}

Following \cite{HIT,jj}, we define the  functional $J_\t:\R\times \X\to \R$  as 
\[
J_\t(\s, u)=
I_\t(u(e^{-\s}\cdot ))
=\frac{e^{N\s}}2 \irn A_\t(e^{-2\s}|\n u|^2)\, dx+e^{N\s}\irn G_2(u)\, dx-e^{N\s}\irn G_1(u)\, dx.
\]
With similar arguments of Lemma \ref{PM1}, also $J_\t$ has a mountain pass geometry and we can define its mountain pass level as
\[
\tilde m_\t:=\inf_{(\s,\g)\in \Sigma\times \Gamma}\max_{t\in [0,1]}J_\t\big(\s(t),\g(t)\big), 
\]
where 
\[
\Sigma:=\{\s\in \cC([0,1],\R)\mid \s(0)=\s(1)=0\}.
\]
Observe that arguing as in \cite[Lemma 3.1]{HIT}, 
we obtain
\begin{Lem}
For any $\t\in (0,\t_1]$, the mountain pass levels of $I_\t$ and $J_\t$ coincide, namely $m_\t=\tilde m_\t$.
\end{Lem}

Now, as an immediate consequence of Ekeland's variational principle \cite[Theorem 2.8]{Willem} (cf.  \cite[Lemma 2.3]{jj}) we obtain the following results.
\begin{Lem}\label{le:ekeland}
Let $\t\in (0,\t_1]$ and $\eps>0$. Suppose that $\tilde\gamma\in \Sigma \times\Gamma$ satisfies 
\[
\max_{t \in [0,1]}J_\t(\tilde \gamma(t))\le m_\t+\eps,
\]
then there exists $(\s, u)\in \R\times \X$ such that
\begin{enumerate}
\item ${\rm dist}_{\R \times \X}\big((\t,u),\tilde{\gamma}([0,1])\big)\le 2 \sqrt{\eps}$;
\item $J_\t(\s,u)\in [m_\t-\eps,m_\t+\eps]$;
\item $\|D J_\t(\s,u)\|_{\R \times \X^*}\le 2 \sqrt{\eps}$.
\end{enumerate}
\end{Lem}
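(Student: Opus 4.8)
The plan is to read this as a purely abstract minimax statement and to apply Ekeland's variational principle not to $J_\t$ on $\R\times\X$ directly, but to the associated max-functional on the space of admissible paths, exactly as in \cite[Lemma 2.3]{jj}; the near-critical point is then extracted by localizing near the maximum of an almost-minimizing path.

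First I would set up the path space. Endow $\Sigma\times\Gamma$ with the uniform metric $\rho(\wt{\g}_1,\wt{\g}_2)=\max_{t\in[0,1]}\|\wt{\g}_1(t)-\wt{\g}_2(t)\|_{\R\times\X}$; since the endpoint constraints $\s(0)=\s(1)=0$ and $\g(0)=0,\ \g(1)=\bar u$ pass to uniform limits, $(\Sigma\times\Gamma,\rho)$ is a complete metric space. Define $\Psi(\wt{\g}):=\max_{t\in[0,1]}J_\t(\wt{\g}(t))$. Because $J_\t$ is $\cC^1$ on $\R\times\X$ (the same computation that yields its mountain pass geometry), $\Psi$ is continuous; it is bounded below, and by the definition of the minimax level together with $\tilde m_\t=m_\t$ one has $\inf_{\Sigma\times\Gamma}\Psi=m_\t$.

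Next I would apply Ekeland's principle \cite[Theorem 2.8]{Willem} to $\Psi$ with parameter $\lambda=\sqrt\eps$: since by hypothesis $\Psi(\wt{\g})\le m_\t+\eps=\inf\Psi+\eps$, there is $\wt{\g}^*=(\s^*,\g^*)\in\Sigma\times\Gamma$ with $\Psi(\wt{\g}^*)\le\Psi(\wt{\g})\le m_\t+\eps$, with $\rho(\wt{\g}^*,\wt{\g})\le\sqrt\eps$, and with the almost-minimality $\Psi(\wt{\eta})\ge\Psi(\wt{\g}^*)-\sqrt\eps\,\rho(\wt{\g}^*,\wt{\eta})$ for every $\wt{\eta}\in\Sigma\times\Gamma$. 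As $\wt{\g}^*$ is admissible, $\Psi(\wt{\g}^*)\ge m_\t$, so $\Psi(\wt{\g}^*)\in[m_\t,m_\t+\eps]$; this will give conclusion (2) once the right point on the path is chosen, while conclusion (1) is already immediate since any $\wt{\g}^*(t)$ lies within $\rho(\wt{\g}^*,\wt{\g})\le\sqrt\eps\le2\sqrt\eps$ of $\wt{\g}(t)\in\wt{\g}([0,1])$. It then remains to select, among the maximizers $T^*:=\{t\in[0,1]:J_\t(\wt{\g}^*(t))=\Psi(\wt{\g}^*)\}$, a point $t^*$ with $\|DJ_\t(\wt{\g}^*(t^*))\|_{\R\times\X^*}\le2\sqrt\eps$, and to set $(\s,u):=\wt{\g}^*(t^*)$.

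The hard part is this last localization-and-deformation step. The crucial geometric fact is that $J_\t(\wt{\g}^*(0))=J_\t(0,0)=0$ and $J_\t(\wt{\g}^*(1))=I_\t(\bar u)<0$ are strictly below $m_\t\le\Psi(\wt{\g}^*)$, so the compact set $T^*$ is bounded away from $\{0,1\}$. I would argue by contradiction: if $\|DJ_\t(\wt{\g}^*(t))\|>2\sqrt\eps$ for all $t\in T^*$, choose a locally Lipschitz normalized pseudo-gradient field $V$ for $J_\t$ near $\wt{\g}^*(T^*)$, so that $\|V\|\le1$ and $\langle DJ_\t,V\rangle\ge\tfrac12\|DJ_\t\|>\sqrt\eps$ there, with strictness giving a slack $\kappa>0$. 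Cut it off by a Lipschitz function $\chi$ depending only on the level, equal to $1$ on $\{J_\t\ge\Psi(\wt{\g}^*)-\delta\}$ and $0$ on $\{J_\t\le\Psi(\wt{\g}^*)-2\delta\}$ for small $\delta\in(0,m_\t/2)$, which automatically vanishes at the low-value endpoints $\wt{\g}^*(0),\wt{\g}^*(1)$. Deforming $\wt{\eta}_r(t):=\wt{\g}^*(t)-r\,\chi(\wt{\g}^*(t))\,V(\wt{\g}^*(t))$, for small $r>0$ the path $\wt{\eta}_r$ stays in $\Sigma\times\Gamma$ (endpoints fixed), satisfies $\rho(\wt{\eta}_r,\wt{\g}^*)\le r$, and, by uniform continuity and the level-dependent cut-off, $\Psi(\wt{\eta}_r)\le\Psi(\wt{\g}^*)-(\sqrt\eps+\kappa)\,r+o(r)<\Psi(\wt{\g}^*)-\sqrt\eps\,r$, contradicting the almost-minimality. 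Hence the desired $t^*$ exists and all three conclusions follow. The main obstacle is precisely making this deformation rigorous: constructing a cut-off pseudo-gradient flow that genuinely preserves the admissible class $\Sigma\times\Gamma$ and realizes a uniform first-order decay over the whole maximizer set, controlling near-maxima rather than only exact maxima. The constant $2$ in (3) is exactly the price of the pseudo-gradient inequality $\langle DJ_\t,V\rangle\ge\tfrac12\|DJ_\t\|$ with $\|V\|\le1$ matched against the Ekeland rate $\sqrt\eps$; beyond $J_\t\in\cC^1$ and the strict gap between $m_\t$ and the endpoint values, the argument is insensitive to the specific form of $J_\t$.
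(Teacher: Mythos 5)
Your proof is correct and takes essentially the same route as the paper: the paper deduces this lemma by directly invoking the general minimax principle (\cite[Theorem 2.8]{Willem}, cf. \cite[Lemma 2.3]{jj}), whose standard proof is exactly what you reconstruct --- Ekeland's variational principle applied to $\Psi(\tilde\gamma)=\max_{t\in[0,1]}J_\t(\tilde\gamma(t))$ on the complete metric space $\Sigma\times\Gamma$ with the uniform metric, followed by a level-localized pseudo-gradient deformation (fixing the endpoints, whose values lie strictly below $m_\t$) that produces a near-critical point among the near-maximizers. In short, you have unfolded the black box that the paper cites.
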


\begin{Prop}\label{pr:sequence}
For any $\t\in (0,\t_1]$, there exists a sequence $\{(\s_n,u_n)\} \subset \R \times \X$ such that, as $n \to +\infty$, we get 
\begin{enumerate}
\item $\s_n \to 0$;
\item $J_\t(\s_n,u_n)\to m_\t$; 
\item $\de_\s J_\t(\s_n,u_n)\to 0$; 
\item $\de_u J_\t(\s_n,u_n)\to 0$ strongly in $\X^*$. 
\end{enumerate}

\end{Prop}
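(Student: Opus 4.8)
The plan is to obtain this Palais--Smale sequence as a direct consequence of the Ekeland-type Lemma~\ref{le:ekeland}, by iterating it along a sequence $\eps_n\to 0$ while choosing the approximating paths so that their $\Sigma$-component is identically zero. This last point is what will force the extra conclusion $\s_n\to 0$, and it is the only step requiring a little care; the remaining three conclusions will follow automatically from the three assertions of Lemma~\ref{le:ekeland} once $\eps_n\to 0$.

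First I would exploit the identity $m_\t=\tilde m_\t$ together with the observation that the paths of the form $(0,\g)$, with $\g\in\Gamma$ and $0$ denoting the constant zero map in $\Sigma$, constitute an admissible subfamily of $\Sigma\times\Gamma$. Since $J_\t(0,\g(t))=I_\t(\g(t))$ for every $t\in[0,1]$, we have
\[
\max_{t\in[0,1]}J_\t(0,\g(t))=\max_{t\in[0,1]}I_\t(\g(t)),
\]
and taking the infimum over $\g\in\Gamma$ recovers precisely $m_\t=\tilde m_\t$. Hence, for each $n\ge 1$, from the definition of $m_\t$ as an infimum I can select $\g_n\in\Gamma$ with
\[
\max_{t\in[0,1]}I_\t(\g_n(t))\le m_\t+\tfrac1n,
\]
and set $\tilde\g_n:=(0,\g_n)\in\Sigma\times\Gamma$, which then satisfies $\max_{t\in[0,1]}J_\t(\tilde\g_n(t))\le m_\t+\tfrac1n$.

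Next I would apply Lemma~\ref{le:ekeland} to this path $\tilde\g_n$ with $\eps=\eps_n:=\tfrac1n$, obtaining a point $(\s_n,u_n)\in\R\times\X$ satisfying the three estimates stated there. Property~(2) gives $J_\t(\s_n,u_n)\in[m_\t-\tfrac1n,m_\t+\tfrac1n]$, whence $J_\t(\s_n,u_n)\to m_\t$; property~(3) gives $\|DJ_\t(\s_n,u_n)\|_{\R\times\X^*}\le 2/\sqrt{n}$, and since both $\de_\s J_\t(\s_n,u_n)$ and $\de_u J_\t(\s_n,u_n)$ are dominated by this full operator norm, they tend to $0$ (the latter strongly in $\X^*$). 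Finally, property~(1) produces a point $\tilde\g_n(t_n)=(0,\g_n(t_n))$ on the path with
\[
|\s_n|=|\s_n-0|\le {\rm dist}_{\R\times\X}\big((\s_n,u_n),\tilde\g_n([0,1])\big)\le \frac{2}{\sqrt{n}},
\]
where I use that the $\Sigma$-component of every point of $\tilde\g_n$ vanishes and that the product norm on $\R\times\X$ dominates its $\R$-component; therefore $\s_n\to 0$. This establishes all four conclusions.

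The only genuine obstacle is securing conclusion~(1), that is $\s_n\to 0$: it is precisely here that one must not use arbitrary near-optimal paths in $\Sigma\times\Gamma$, but rather the special ones with vanishing $\Sigma$-component, whose adequacy is guaranteed by the equality $m_\t=\tilde m_\t$. Everything else is a routine passage to the limit in the three estimates furnished by Lemma~\ref{le:ekeland}.
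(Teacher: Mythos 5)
Your proof is correct and is precisely the argument the paper intends: its one-line proof (``In view of Lemma \ref{le:ekeland} we conclude by letting $\eps\to0$'') is exactly your construction, namely applying Lemma \ref{le:ekeland} with $\eps_n=1/n$ to paths of the form $(0,\g_n)$ whose admissibility is justified by $m_\t=\tilde m_\t$, so that the vanishing $\Sigma$-component yields $\s_n\to 0$. You have simply spelled out the details the paper leaves implicit.
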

\begin{proof}
In view of Lemma \ref{le:ekeland} we conclude by letting $\eps\to0$.
\end{proof}

Now we find a radial solution of the truncated problem \eqref{eqTheta}.

\begin{Prop}\label{pr:sol}
For any $\t\in (0,\t_1]$, there exists $u_\t\in \X$ a non-trivial  solution of \eqref{eqTheta} such $I_\t(u_\t)=m_\t$. Moreover
there exists $C>0$ such that 
\begin{equation}\label{unifbdd}
\|u_\t\|_0\le C, \quad\hbox{ for any }\t\in (0,\t_1].
\end{equation}
Finally $u_\t$ is a weak solution of 
\begin{equation}\label{eqrad}
-\big(r^{N-1}a_\t(|u'_\t(r)|^2)u'_\t(r)\big)'=r^{N-1}g(u_\t(r)),
\end{equation}
namely 
\[
\int_0^{+\infty}r^{N-1}a_\t(|u'_\t(r)|^2)u'_\t(r)v'(r)\, dr
=\int_0^{+\infty}r^{N-1}g(u_\t(r))v(r)\,dr,
\]
for all $v\in \X$.
\end{Prop}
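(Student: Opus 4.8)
The plan is to extract from Proposition \ref{pr:sequence} the almost-critical, almost-Pohozaev sequence $\{(\s_n,u_n)\}$ and to prove it converges, up to a subsequence, to the desired $u_\t$. First I would exploit the two conditions $J_\t(\s_n,u_n)\to m_\t$ and $\de_\s J_\t(\s_n,u_n)\to 0$ together with $\s_n\to 0$: writing both relations at $\s=0$ (the error produced by $\s_n\neq 0$ is absorbed into $o_n(1)$ thanks to \eqref{a2q}, \eqref{A2q}) and eliminating the term $\irn A_\t(|\n u_n|^2)\,dx$, I obtain the Pohozaev-type identity
\[
\irn a_\t(|\n u_n|^2)|\n u_n|^2\,dx = N m_\t + o_n(1).
\]
By \eqref{a2q} this already controls $|\n u_n|_2$ and $|\n u_n|_q$, while \eqref{eq:G2}, the smallness of $G_1$ near $0$ (from \eqref{eq:NewCond1}), and the elementary estimate on $\mathrm{meas}\{|u_n|>\bar c_2\}$ control $|u_n|_\g$; hence $\{u_n\}$ is bounded in $\X$. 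Since by \eqref{mtbdd} the level $m_\t$ is bounded above uniformly in $\t$, this bound is uniform in $\t$, which will yield \eqref{unifbdd} once convergence is established.

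Next I pass to a subsequence with $u_n\weakto u_0$ in $\X$, $u_n\to u_0$ in $L^p$ for the compact range and a.e. The crucial structural remark is that \eqref{a0} forces the operator to be monotone: since $s\mapsto a_\t(s)s$ is strictly convex and vanishes at $s=0$, the quotient $a_\t(s)=\big(a_\t(s)s\big)/s$ is strictly increasing, so $w\mapsto \tfrac12 A_\t(|w|^2)$ is strictly convex on $\R^N$ and its gradient $w\mapsto a_\t(|w|^2)w$ is strictly monotone. I would use this as follows. Testing $\de_u J_\t(\s_n,u_n)\to 0$ with $u_0$ gives, after $\s_n\to 0$,
\[
\irn a_\t(|\n u_n|^2)\n u_n\cdot\n u_0\,dx \to \irn g(u_0)u_0\,dx,
\]
the right-hand side following from a Vitali-type argument as in Lemma \ref{le:convG} (using the uniform $L^\infty$-bound and the $L^\g$-control). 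Testing instead with $u_n$ yields $\irn a_\t(|\n u_n|^2)|\n u_n|^2\,dx=\irn g(u_n)u_n\,dx+o_n(1)$; splitting $g=g_1-g_2$, applying \eqref{conv-g1} to the $g_1$-part and Fatou's lemma to the nonnegative $g_2$-part gives $N m_\t\le \irn g(u_0)u_0\,dx$, whereas monotonicity, in the form $\irn a_\t(|\n u_n|^2)\n u_n\cdot\n(u_n-u_0)\,dx\ge \irn a_\t(|\n u_0|^2)\n u_0\cdot\n(u_n-u_0)\,dx\to 0$, together with the previous display gives the reverse inequality. Hence
\[
N m_\t=\irn g(u_0)u_0\,dx=\lim_{n}\irn a_\t(|\n u_n|^2)|\n u_n|^2\,dx,
\]
and since $m_\t>0$ this also proves $u_0\neq 0$.

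With these identities the monotonicity defect $\irn\big(a_\t(|\n u_n|^2)\n u_n-a_\t(|\n u_0|^2)\n u_0\big)\cdot\n(u_n-u_0)\,dx$ is nonnegative and tends to $0$; by strict monotonicity and the coercivity in \eqref{a2q} this forces $\n u_n\to\n u_0$ a.e. (a pointwise blow-up of $|\n u_n|$ would make the defect diverge). Passing to the limit in the weak formulation against $\cC_0^\infty$ test functions, using the local uniform integrability of $a_\t(|\n u_n|^2)\n u_n$ (Vitali), I conclude that $u_0$ is a weak solution of \eqref{eqTheta}; testing it with $u_0$ gives $\irn a_\t(|\n u_0|^2)|\n u_0|^2\,dx=\irn g(u_0)u_0\,dx=N m_\t$, i.e.\ exactly hypothesis \eqref{eq:lemmaconv1}. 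Lemma \ref{lem:convgrad} then upgrades the convergence to $u_n\to u_0$ strongly in $\X_0$, whence $J_\t(\s_n,u_n)\to I_\t(u_0)$ and $I_\t(u_\t)=m_\t$ with $u_\t:=u_0$. Finally, rewriting the radial weak formulation in polar coordinates, with $\n u=u'(r)\,x/|x|$ and $dx=\omega_{N-1}r^{N-1}\,dr$, turns it into \eqref{eqrad}. The main obstacle is precisely the passage to the limit in the quasilinear principal part $a_\t(|\n u_n|^2)\n u_n$: weak convergence of $\n u_n$ does not suffice and, unlike the subcritical Berestycki--Lions setting, the ``mass'' term $g_2$ provides no compactness, living at the endpoint exponent $\g$ outside the compact embedding range. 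The resolution combines the Pohozaev identity (which pins down $\lim\irn a_\t(|\n u_n|^2)|\n u_n|^2\,dx$), the monotonicity hidden in \eqref{a0}, and Lemma \ref{lem:convgrad}; the delicate points to verify are the Vitali arguments for the $g_2$-terms and the a.e.\ convergence of gradients extracted from the vanishing monotonicity defect.
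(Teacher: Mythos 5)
Your argument follows the paper's skeleton at the beginning and the end (Pohozaev identity extracted from the first two relations of Proposition \ref{pr:sequence}, boundedness in $\X$, Lemma \ref{le:convG} for the $g_1$-terms, Fatou for the $g_2$-terms, Lemma \ref{lem:convgrad} to upgrade to strong convergence, the Pohozaev identity for the limit to get \eqref{unifbdd}), but the core compactness step is genuinely different. Where the paper simply asserts that the weak limit solves \eqref{eqTheta} and then combines its Nehari identity with weak lower semicontinuity, Fatou and the third relation of the system to obtain \eqref{conv1} and \eqref{conv2}, you run a Minty--Browder monotone-operator argument: testing $\de_u J_\t(\s_n,u_n)\to0$ with $u_0$ and with $u_n$, using the strict monotonicity of $w\mapsto a_\t(|w|^2)w$ (which does follow from \eqref{a0}, as you note) to squeeze $Nm_\t\le\irn g(u_0)u_0\,dx\le Nm_\t$, then extracting a.e.\ convergence of gradients from the vanishing monotonicity defect and passing to the limit by Vitali. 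This is a legitimate alternative; indeed it supplies a justification for the step the paper leaves implicit (why weak convergence plus vanishing of the derivative makes the limit a solution of a \emph{quasilinear} equation), and it yields nontriviality directly from $\irn g(u_0)u_0\,dx=Nm_\t>0$ rather than from the energy level.

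There is, however, one genuine gap at the very end. From Lemma \ref{lem:convgrad} you only get $u_n\to u_0$ strongly in $\X_0$, and you conclude ``whence $J_\t(\s_n,u_n)\to I_\t(u_0)$''. This does not follow: hypothesis \eqref{g1} allows $2^*/2\le\g<2^*$, in which case the $\X$-norm contains $|u|_\g$, the embedding of $\X_0$ into $L^\g(\RN)$ fails, and neither strong $\X_0$-convergence nor the uniform decay $|u_n(x)|\le C|x|^{-(N-2)/2}$ (whose $\g$-th power is not integrable at infinity when $\g<2^*$) prevents $L^\g$-mass, hence $\irn G_2(u_n)\,dx$, from escaping to infinity; Fatou only gives $I_\t(u_0)\le m_\t$. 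To close the gap you need the paper's additional step: your identities already yield \eqref{conv2}, since $\irn g_2(u_n)u_n\,dx=\irn g_1(u_n)u_n\,dx-\irn a_\t(|\n u_n|^2)|\n u_n|^2\,dx+o_n(1)\to\irn g_1(u_0)u_0\,dx-Nm_\t=\irn g_2(u_0)u_0\,dx$; then write $g_2(s)s=m|s|^{\g}+h(s)$ with $h\ge0$ continuous, apply Fatou to both pieces to deduce $|u_n|_\g\to|u_0|_\g$, hence $u_n\to u_0$ strongly in $\X$, and only then do $\irn G_2(u_n)\,dx\to\irn G_2(u_0)\,dx$ and $I_\t(u_0)=m_\t$ follow. (For $\g\ge2^*$ your shortcut is fine, since then $\X$ and $\X_0$ coincide.)
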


\begin{proof}
Fix $\t\in (0,\t_1]$. By Proposition \ref{pr:sequence}, there exists a sequence $\{(\s_n,u_n)\} \subset \R \times \X$ such that
\begin{equation}\label{sistema}
\begin{cases}
\dis\frac{e^{N\s_n}}{2}\irn A_\t(e^{-2\s_n}|\n u_n|^2)\, dx
+e^{N\s_n}\irn G_2(u_n)\, dx
-e^{N\s_n}\irn G_1(u_n) \, dx =m_\t+o_n(1),
\\[7mm]
\dis\frac{Ne^{N\s_n}}{2}\irn A_\t(e^{-2\s_n}|\n u_n|^2)\, dx
-e^{(N-2)\s_n}\irn a_\t(e^{-2\s_n}|\n u_n|^2)|\n u_n|^2\, dx
\\[2mm]
\dis \hspace{5.5cm}+Ne^{N\s_n}\irn G_2(u_n)\, dx
-Ne^{N\s_n}\irn G_1(u_n) \, dx =o_n(1),
\\[7mm]
\dis e^{(N-2)\s_n}\irn a_\t(e^{-2\s_n}|\n u_n|^2)|\n u_n|^2\, dx
+e^{N\s_n}\irn g_2(u_n)u_n\, dx
-e^{N\s_n}\irn g_1(u_n) u_n\, dx =o_n(1)\|u_n\|.
\end{cases}
\end{equation}
From the first and the second equation of the previous system we get
\[
e^{(N-2)\s_n}\irn a_\t(e^{-2\s_n}|\n u_n|^2)|\n u_n|^2\, dx=N m_\t+o_n(1).
\]
Therefore, since $\s_n \to 0$, as $n \to +\infty$, by \eqref{a2q} we deduce that 
$\{u_n\}$ is a bounded sequence in $\X_0$ and so also in $L^\infty(\RN)$, namely there exists $\bar C>0$ such that $|u_n|_\infty\le \bar C$, for any $n\ge 1$. 
This implies that, by \eqref{eq:NewCond1} and  Lemma \ref{le:strauss}, there exists $R>1$ such that
\begin{equation*}
G_1(u_n(x))\le \frac m{2\g} |u_n(x)|^{\g}, \qquad \hbox{a.e. $x\in \RN$ with $|x|\ge R$ and for any $n\ge 1$.}
\end{equation*} 
Hence
\begin{align*}
\irn G_1(u_n)\, dx= \int_{B_R} G_1(u_n)\, dx
+\int_{B_R^c} G_1(u_n)\, dx
\le C\max_{\{s\le \bar C\}}|G_1(s)|+\frac m{2\g}\irn  |u_n(x)|^{\g}\, dx.
\end{align*}
By this, by \eqref{eq:G2} and by the first equation of \eqref{sistema},  we infer that $\{u_n\}$ is a bounded sequence also in $\X$. Then there exists $u_\t\in \X$ such that $u_n \weakto u_\t$ in $\X$. Since $\de_u J_\t(\s_n,u_n)\to 0$ strongly in $\X^*$ and $\s_n \to 0$, we have that $u_\t$ is a weak (possibly trivial) solution of \eqref{eqTheta} and so it satisfies
\[
\irn a_\t(|\n u_\t|^2)|\n u_\t|^2\, dx
+\irn g_2(u_\t)u_\t\, dx
=\irn g_1(u_\t) u_\t\, dx.
\]
Since  $u_n \weakto u_\t$ in $\X$, by the weak lower semicontinuity and the Fatou's Lemma we have that
\begin{align*}
\irn a_\t(|\n u_\t|^2)|\n u_\t|^2\, dx 
&\le \liminf_{n \to +\infty}\irn a_\t(|\n u_n|^2)|\n u_n|^2\, dx,
\\
\irn g_2(u_\t)u_\t\, dx
&\le \liminf_{n \to +\infty}\irn g_2(u_n)u_n\, dx;
\end{align*}
while, by Lemma \ref{le:convG}, we have
\[
\irn g_1(u_\t)u_\t\, dx
= \lim_{n \to +\infty}\irn g_1(u_n)u_n\, dx.
\]
Therefore, by the third equation of \eqref{sistema},
\begin{align*}
&\irn a_\t(|\n u_\t|^2)|\n u_\t|^2\, dx 
+\irn g_2(u_\t)u_\t\, dx
\\
&\qquad\le \liminf_{n\to +\infty}\left[\irn a_\t(|\n u_n|^2)|\n u_n|^2\, dx
+\irn g_2(u_n)u_n\, dx\right]
\\
&\qquad=\liminf_{n\to +\infty}\left[e^{(N-2)\s_n}\irn a_\t(e^{-2\s_n}|\n u_n|^2)|\n u_n|^2\, dx
+e^{N\s_n}\irn g_2(u_n)u_n\, dx\right]
\\
&\qquad= \dis\liminf_{n\to +\infty} \left[e^{N\s_n}\irn g_1(u_n) u_n\, dx +o_n(1)\|u_n\|\right]
\\
&\qquad=\irn g_1(u_\t) u_\t\, dx
=\irn a_\t(|\n u_\t|^2)|\n u_\t|^2\, dx
+\irn g_2(u_\t)u_\t\, dx
\end{align*}
and so
\begin{align}
\irn a_\t(|\n u_\t|^2)|\n u_\t|^2\, dx 
&= \lim_{n \to +\infty}\irn a_\t(|\n u_n|^2)|\n u_n|^2\, dx, \label{conv1}
\\
\irn g_2(u_\t)u_\t\, dx
&= \lim_{n \to +\infty}\irn g_2(u_n)u_n\, dx. \label{conv2}
\end{align}
In view of Lemma \ref{lem:convgrad} equation \eqref{conv1} implies that $u_n \to u_\t$ strongly in $\X_0$.
\\
Moreover, since, by \eqref{eq:NewCond2}, we know that for any $s\in \R$ we can write $g_2(s)s=m|s|^{\g}+h(s)$, where $h$ is a non-negative continuous function, 
by Fatou's Lemma we deduce that
\begin{align*}
\irn |u_\t|^{\g}\, dx 
&\le \liminf_{n \to +\infty}\irn | u_n|^{\g}\, dx,
\\
\irn h(u_\t)\, dx
&\le \liminf_{n \to +\infty}\irn h(u_n)\, dx.
\end{align*}
These last two inequalities and \eqref{conv2} imply that
\[
\irn |u_\t|^{\g}\, dx 
=\lim_{n \to +\infty}\irn | u_n|^{\g}\, dx
\]
and so, actually, $u_n \to u_\t$ strongly in $\X$ and so $I_\t(u_\t)=m_\t$. 
\\
Finally, since 
\[
\irn a_\t(|\n u_\t|^2)|\n u_\t|^2\, dx =Nm_\t,
\]
by \eqref{mtbdd} and \eqref{a2q}, we prove that there exists $C>0$ such that $\|u_\t\|_0\le C$, for any $\t\in (0,\t_1]$.

\end{proof}

We are now able to conclude the proof of our main theorem in the positive mass case.

\begin{proof}[Proof of Theorem \ref{main}]
By Proposition \ref{pr:sol}, for any $\t\in (0,\t_1]$, there exists $u_\t\in \X$ a nontrivial solution of \eqref{eqTheta} such $I_\t(u_\t)=m_\t$. 
Since $q>N$, by \cite{L}, we deduce that $u_\t\in \cC^{1,\a}$, for some $\a\in (0,1)$. 
\\
Let us prove the following

\medspace \noindent
{\sc Claim:} there exists $C>0$ such that 
\begin{equation}\label{claim}
|a_\t(|u'_\t(r)|^2)u'_\t(r)|\le C, \qquad \hbox{for any $r\ge 0$ and $\t\in (0,\t_1]$}.
\end{equation}

\medspace

\noindent By the regularity of $u_\t$, we infer that $u_\t'(0)=0$ and so also 
\[
a_\t(|u'_\t(0)|^2)u'_\t(0)=0.
\]
We now consider the case $r>0$. 
Integrating the equation \eqref{eqrad}, for any $r>0$, we have
\begin{equation*}
-a_\t(|u'_\t(r)|^2)u'_\t(r)
=\frac1{r^{N-1}}\int_0^r s^{N-1}g(u_\t(s))\, ds.
\end{equation*}
By Lemma \ref{le:strauss} and by \eqref{unifbdd}, we deduce that there exists $R>1$, such that 
\begin{equation}\label{stimaunif}
 |u_\t(r)|\le \bar c_2, \quad\hbox{for any  $\t\in (0,\t_1]$ and for any $r>R$,} 
\end{equation}
where $\bar c_2$ is  defined in \eqref{gbarc}. 
\\
By the continuous embedding of $\X_0$ in $L^\infty(\RN)$ and \eqref{unifbdd},  there exists $C>0$ such that $|u_\t|_\infty\le C\|u_\t\|_0\le C$, for any $\t\in (0,\t_1]$, and so we have that, for any $0<r\le R$ and $\t\in (0,\t_1]$,  
\begin{equation*}
|a_\t(|u'_\t(r)|^2)u'_\t(r)|
\le \frac1{r^{N-1}}\int_0^r s^{N-1}|g(u_\t(s))|\, ds\le C.
\end{equation*}
While, for any $r> R$, 
\begin{align*}
|a_\t(|u'_\t(r)|^2)u'_\t(r)|
&\le \frac1{r^{N-1}}\int_0^r s^{N-1}|g(u_\t(s))|\, ds
\\
&\le \frac1{r^{N-1}}\left(\int_0^R s^{N-1}|g(u_\t(s))|\, ds
+\int_R^r s^{N-1}|g(u_\t(s))|\, ds\right)\\
&\le \frac C{r^{N-1}}
+\underbrace{\frac{c_1}{r^{N-1}}\int_1^r s^{N-1}|g(u_\t(s))|\, ds}_{(A)}.
\end{align*}
We have to estimate $(A)$. First of all, by Lemma \ref{le:strauss} and \eqref{unifbdd}, for $r>1$, we have that
\[
|u_\t(r)| \le C r^{-\frac{N-2}2} |\n u_\t|_2\le \bar C r^{-\frac{N-2}2}.
\]
Hence, by \eqref{stimaunif} and \eqref{gbarc}, since $\g\ge 2^*/2$,
\begin{align*}
(A)&
\le \frac{C}{r^{N-1}}\int_1^r s^{N-1}|u_\t(s)|^{\g-1}\, ds
\le \frac{ C}{r^{N-1}}\int_1^r s^{N-1-\frac{N-2}2(\g-1)}\, ds
\le C \left(r^{1-\frac{N-2}2(\g-1)}+1\right)\le C.
\end{align*}
Therefore the claim  is proved.
\\
Now we conclude if we show
the existence of  $\bar \t\in (0,\t_1]$ such that 
\begin{equation}\label{claim2}
|u'_{\bar \t}(r)|\le 1-\bar \t, \qquad \hbox{for any $r\ge 0$}.
\end{equation}
Suppose by contradiction that \eqref{claim2} does not hold, then there exists a sequence $\{\t_n\}\subset (0,\t_1]$ which tends to zero and a sequence $\{r_n\}\subset  \R_+$ such that 
\[
\lim_n |u'_{\t_n}(r_n)|=1,
\]
which implies that (by \eqref{a1})
\[
\lim_n a_{\t_n}(|u'_{\t_n}(r_n)|)|u'_{\t_n}(r_n)|
=+\infty.
\]
Thus we obtain a contradiction with \eqref{claim}.
\\
Finally, taking into account \eqref{a2q}, \eqref{A2q} and Lemma \ref{bendef}, we get
\[
\irn A(|\n u_{\bar \t}|^2)\,dx, \irn a(|\n u_{\bar \t}|^2)|\n u_{\bar \t}|^2\,dx, \irn |G(u_{\bar \t})|\, dx<+\infty.
\]
\end{proof}

\begin{Rem}
	Note that in \eqref{g1}  we assume $\gamma \geq 2^*/2$. Actually this is a  technical requirement which is essential only in proof of Theorem \ref{main}, while, in all the other our arguments, we can simply take $\gamma>1$. Therefore there exists a radial solution to
	\eqref{eqTheta} for any $\gamma>1$. 
\end{Rem}

\section{The zero mass case}\label{se0}

In this section we deal with the zero mass case, namely, we will assume that $g$ satisfies \eqref{g0} and \eqref{g3}. Moreover  $\gamma>N$ and \eqref{g1z} holds, or    $\gamma\leq N$ and  both \eqref{g1z}, \eqref{g1zinf} hold.

Let 
$g_1(s):=
\max\{g(s),0\}$ 
and $g_2(s):=g_1(s)-g(s)$ for $s\geq 0$ and then we can extend them as odd functions for $s<0$. 
Then $g_1(s),g_2(s)\geq 0$, for $s\geq 0$ and
\begin{align}\label{eq:NewCond1z}
\lim_{s\to 0} g_1(s)/|s|^{\gamma-1}&=0, \quad
\text{for some }\g>2^*.
\end{align}
Moreover, whenever $\g\in (2^*,N]$, we have
\begin{equation}\label{eq:NewCond1zinf}
\lim_{s\to+\infty} g_1(s)/|s|^{q^*-1}=0, 
\end{equation}
For $i=1,2$ we set 
$$G_i(s)=\int_0^s g_i(t)\, dt$$ 
and note that $G_i(s) \ge 0$ for $s\in\R$.

In view of  \eqref{g1z}, there exist two positive constants, $\bar c_1$ and $\bar c_2$, such that 
\begin{align}
&|g(s)|\le \bar c_1 |s|^{\g-1}, & \hbox{for all }|s|\le \bar c_2, \label{gbarcz}
\\
&|G(s)|\le \bar c_1 |s|^{\g}, & \hbox{for all }|s|\le \bar c_2,
\label{Gbarcz}
\\
&|g_1(s)|\le \bar c_1 |s|^{\g-1}, & \hbox{for all }|s|\le \bar c_2, \label{g1barcz}
\\
&|G_1(s)|\le \bar c_1 |s|^{\g}, & \hbox{for all }|s|\le \bar c_2.
\label{G1barcz}
\end{align}
Moreover, in the case $\g\in (2^*,N]$, by  \eqref{g1z} and \eqref{g1zinf}, there exists a positive constant $\bar c_3$ such that 
\begin{align}
&|g(s)|\le \bar c_3 \left(|s|^{\g-1}+|s|^{q^*-1}\right), & \hbox{for all }s\in \R, \label{gbarczinf}
\\
&|G(s)|\le \bar c_3 \left(|s|^{\g}+|s|^{q^*}\right), & \hbox{for all }s\in \R,
\label{Gbarczinf}
\\
&|g_1(s)|\le \bar c_3 \left(|s|^{\g-1}+|s|^{q^*-1}\right), & \hbox{for all }s\in \R, \label{g1barczinf}
\\
&|G_1(s)|\le \bar c_3 \left(|s|^{\g}+|s|^{q^*}\right), & \hbox{for all }s\in \R.
\label{G1barczinf}
\end{align}

Arguing as in the proof of Lemma \ref{bendef}, we have
\begin{Lem}
For any $u\in \X_0$, $\irn G(u)\, dx$ and $\irn g(u)u \, dx$ are well defined. The same is true for $\irn G_i(u)\, dx$ and $\irn g_i(u)u \, dx$, for $1=1,2$.
\end{Lem}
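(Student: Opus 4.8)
The plan is to mimic the proof of Lemma~\ref{bendef}. Since $q>N$, the space $\X_0$ embeds continuously into $L^p(\RN)$ for every $p\in[2^*,+\infty]$; in particular every $u\in\X_0$ lies in $L^\gamma(\RN)\cap L^\infty(\RN)$ and is bounded. As in the positive mass case, the only delicate point is to control the integrands on the region where $|u|$ is large, and the argument naturally splits according to whether a growth condition at infinity on $g$ is available.

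First I would treat the case $\gamma>N$, where only the estimates near the origin \eqref{gbarcz}, \eqref{Gbarcz} (and \eqref{g1barcz}, \eqref{G1barcz} for the functions $g_1,G_1$) are at our disposal. Writing, exactly as in Lemma~\ref{bendef},
\[
\irn|G(u)|\,dx=\int_{\{|u|\le\bar c_2\}}|G(u)|\,dx+\int_{\{|u|>\bar c_2\}}|G(u)|\,dx,
\]
on the first set I would bound the integrand by $\bar c_1|u|^\gamma$ via \eqref{Gbarcz}, which is integrable because $u\in L^\gamma(\RN)$. On the second set I would use that $|G(u)|\le\max_{\{|s|\le|u|_\infty\}}|G(s)|<+\infty$ since $u\in L^\infty(\RN)$, while Chebyshev's inequality yields ${\rm meas}\{|u|>\bar c_2\}\le\bar c_2^{-\gamma}|u|_\gamma^\gamma<+\infty$; hence this contribution is finite. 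The quantities $\irn g(u)u\,dx$, $\irn G_i(u)\,dx$ and $\irn g_i(u)u\,dx$ are then handled verbatim, using \eqref{gbarcz}, \eqref{g1barcz} and \eqref{G1barcz}.

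Next I would treat the case $\gamma\le N$, in which the condition at infinity \eqref{g1zinf} provides the global bounds \eqref{gbarczinf}--\eqref{G1barczinf}. Here no splitting of the domain is needed: since $u\in L^\gamma(\RN)\cap L^{q^*}(\RN)$ (again by the embedding of $\X_0$, as $2^*<\gamma<q^*$), estimate \eqref{Gbarczinf} gives directly
\[
\irn|G(u)|\,dx\le\bar c_3\big(|u|_\gamma^\gamma+|u|_{q^*}^{q^*}\big)<+\infty,
\]
and the analogous bounds \eqref{gbarczinf}, \eqref{g1barczinf}, \eqref{G1barczinf} settle $\irn g(u)u\,dx$, $\irn G_i(u)\,dx$ and $\irn g_i(u)u\,dx$.

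The argument is essentially routine; the only point deserving care is the case $\gamma>N$, where no information on $g$ at infinity is assumed. There the finiteness of the integral over $\{|u|>\bar c_2\}$ relies entirely on the boundedness $u\in L^\infty(\RN)$, so that $G$ is bounded on the range of $u$, together with the finiteness of the superlevel set's measure coming from $u\in L^\gamma(\RN)$. This is precisely the mechanism already exploited in Lemma~\ref{bendef}, now available thanks to the $L^\infty$-embedding of $\X_0$ valid for $q>N$.
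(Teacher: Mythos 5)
Your proof is correct, and it is actually more explicit than the paper's own treatment, which consists of the single line ``Arguing as in the proof of Lemma \ref{bendef}, we have\dots''. Your case $\gamma>N$ reproduces exactly the mechanism of Lemma \ref{bendef}: splitting at the level $\bar c_2$, the bound \eqref{Gbarcz} on the sublevel set, and boundedness of $G$ on the range of $u$ on the superlevel set; your Chebyshev estimate ${\rm meas}\{|u|>\bar c_2\}\le \bar c_2^{-\gamma}|u|_\gamma^\gamma$ spells out a finiteness point the paper leaves implicit. Your case $\gamma\le N$ necessarily departs from that mechanism: there $q\in\big(\tfrac{N\gamma}{N+\gamma},N\big)$ by \eqref{g1zinf}, so $\X_0$ does not embed into $L^\infty(\RN)$ and the splitting argument of Lemma \ref{bendef} is literally unavailable; replacing it by the global growth bounds \eqref{gbarczinf}--\eqref{G1barczinf} combined with the embedding $\X_0\hookrightarrow L^p(\RN)$ for $p\in[2^*,q^*]$ (using $2^*<\gamma<q^*$) is the natural, indeed forced, adaptation, and is presumably what the authors intend their one-line reference to mean. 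So your write-up makes visible a case distinction that the paper glosses over, which is a genuine (if modest) improvement in rigor.

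Two small blemishes. First, your opening sentence asserts ``Since $q>N$, the space $\X_0$ embeds continuously into $L^p(\RN)$ for every $p\in[2^*,+\infty]$'' as though this held unconditionally; it contradicts your own (correct) handling of the case $\gamma\le N$, where $q<N$. That sentence is legitimate only in the case $\gamma>N$, where the paper chooses $q\in(N,\gamma)$, and should be moved inside that case. Second, for $i=2$ you should remark that $g_2=g_1-g$ inherits the relevant growth estimates from \eqref{gbarcz} and \eqref{g1barcz} (resp.\ \eqref{gbarczinf} and \eqref{g1barczinf}); this is the same level of brevity as the paper's ``the arguments are similar'', so it is a matter of polish, not a gap.
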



The following compactness results hold.

\begin{Lem}\label{le:convGz}
If $u_n\weakto u_0$ in $\X_0$, then 
\begin{equation*}\label{conv-g1z}
\lim_n\int_{\R^N}g_1(u_n)u_n\, dx= \int_{\R^N}g_1(u_0)u_0\, dx
\end{equation*}
and 
\begin{equation*}\label{conv-g2z}
\lim_n \int_{\R^N}G_1(u_n)\, dx= \int_{\R^N}G_1(u_0)\, dx.
\end{equation*}
\end{Lem}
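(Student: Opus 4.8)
The plan is to mirror the proof of its positive mass counterpart, Lemma \ref{le:convG}, since all the tools used there remain at our disposal. Because $q>N$, the space $\X_0$ embeds continuously into $L^\infty(\RN)$, so the weakly convergent (hence bounded) sequence $\{u_n\}$ satisfies $|u_n|_\infty\le M$ for some $M>0$; furthermore $\X_0$ embeds compactly into $L^\beta(\RN)$ for every $\beta\in(2^*,+\infty)$, and $\gamma>2^*$ guarantees the continuous embedding $\X_0\hookrightarrow L^\gamma(\RN)$. The role played by \eqref{eq:NewCond1} in the positive mass case is taken over by \eqref{eq:NewCond1z}, which again makes $g_1$ vanish faster than $|s|^{\gamma-1}$ near the origin. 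I would split the argument into three steps and prove the second asserted limit analogously to the first.

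For the analogue of Step 1, I would show $\int_{\RN}g_1(u_n)(u_n-u_0)\,dx\to 0$. Fixing $\eps>0$ and $\beta\in(2^*,+\infty)$, condition \eqref{eq:NewCond1z}, the continuity of $g_1$ and the bound $|u_n|_\infty\le M$ provide $\delta\in(0,M)$ and $c_\eps>0$ such that $|g_1(s)|\le\eps|s|^{\gamma-1}$ for $|s|\le\delta$ and $|g_1(s)|\le c_\eps|s|^{\beta-1}$ for $\delta<|s|\le M$. Splitting the integral accordingly and using Hölder's inequality, the first contribution is bounded by $\eps|u_n|_\gamma^{\gamma-1}|u_n-u_0|_\gamma\le\eps C$, while the second is dominated by $c_\eps|u_n|_\beta^{\beta-1}|u_n-u_0|_\beta\to 0$ thanks to the compact embedding $\X_0\hookrightarrow\hookrightarrow L^\beta(\RN)$. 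Taking $\limsup_n$ and then letting $\eps\to 0$ closes this step.

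Next, for the analogue of Step 2, I would prove $\int_{\RN}g_1(u_n)u_0\,dx\to\int_{\RN}g_1(u_0)u_0\,dx$ by the Vitali Convergence Theorem. Up to a subsequence, the strong $L^\beta$ convergence gives $u_n\to u_0$ a.e., hence $g_1(u_n)u_0\to g_1(u_0)u_0$ a.e.; uniform integrability follows from $|g_1(u_n)|\le\max_{|s|\le M}|g_1(s)|$ together with $u_0\in L^\gamma(\RN)$. For tightness I would use the radial decay of Lemma \ref{le:strauss}, which for $p=2$ gives $|u_n(x)|\le C|x|^{-(N-2)/2}$ uniformly in $n$, so that \eqref{g1barcz} applies for $|x|$ large and $\int_{|x|>R}|g_1(u_n)u_0|\,dx\le\bar c_1\big(\int_{|x|>R}|u_n|^{\gamma}\big)^{(\gamma-1)/\gamma}\big(\int_{|x|>R}|u_0|^{\gamma}\big)^{1/\gamma}\to 0$ as $R\to+\infty$, uniformly in $n$. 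The first asserted limit then follows from the identity $\int_{\RN}g_1(u_n)u_n\,dx=\int_{\RN}g_1(u_n)(u_n-u_0)\,dx+\int_{\RN}g_1(u_n)u_0\,dx$ and the two preceding steps.

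Finally, the second asserted limit is obtained by the same Vitali argument with $G_1(u_n)$ replacing $g_1(u_n)u_0$: the a.e. convergence $G_1(u_n)\to G_1(u_0)$ is immediate by continuity, uniform integrability comes from the $L^\infty$ bound on $\{u_n\}$, and tightness from $|G_1(u_n(x))|\le\bar c_1|u_n(x)|^{\gamma}$ for $|x|$ large together with $\int_{|x|>R}|u_n|^{\gamma}\,dx\to 0$. I expect this last tightness estimate to be the only delicate point, and its validity rests precisely on the strict inequality $\gamma>2^*$: the decay $|u_n(x)|\le C|x|^{-(N-2)/2}$ makes $\int_{|x|>R}|u_n|^{\gamma}\,dx\le C\int_R^{+\infty}r^{N-1-\gamma(N-2)/2}\,dr$ converge to $0$ as $R\to+\infty$ exactly when $\gamma(N-2)/2>N$, i.e. $\gamma>2^*$. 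It is worth noting that the growth conditions at infinity \eqref{eq:NewCond1zinf} and \eqref{g1barczinf} are not needed in this lemma, since the $L^\infty$ bound coming from $q>N$ makes the behaviour of $g_1$ for large arguments irrelevant to all the estimates above.
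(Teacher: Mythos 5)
There is a genuine gap: your proof silently covers only the case $\gamma>N$ and misses the case $\gamma\in(2^*,N]$, which is precisely the case the paper's own proof concentrates on. Your entire argument rests on the opening assertion that $q>N$, so that $\X_0\hookrightarrow L^\infty(\RN)$ and the sequence has a uniform bound $|u_n|_\infty\le M$. But in the zero mass setting the exponent $q$ is not always taken larger than $N$: when $2^*<\gamma\le N$, assumption \eqref{g1zinf} fixes $q\in\left(\frac{N\gamma}{N+\gamma},N\right)$, hence $q<N$ (this is confirmed in the paper's proof of Theorem \ref{main} in the zero mass section, which explicitly notes ``Being $q<N$, we cannot repeat the arguments of the previous section'', and in Lemma \ref{PM1z}, whose case $\gamma\in(2^*,N]$ works with the embedding into $L^{q^*}$ rather than $L^\infty$). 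With $q<N$ the space $\X_0=\X^{2,q}_{0,\rm rad}$ embeds into $L^p(\RN)$ only for $p\in[2^*,q^*]$, there is no $L^\infty$ bound, and your two-regime splitting of $g_1$ (small $|s|$ versus $\delta<|s|\le M$) cannot control $g_1(u_n)$ on the set where $|u_n|$ is large. Your closing remark that the conditions \eqref{eq:NewCond1zinf} and \eqref{g1barczinf} ``are not needed in this lemma'' is therefore exactly backwards: they are what rescues the argument when $\gamma\le N$.

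The paper's proof handles this by a three-regime estimate: given $\eps>0$ and $\beta\in(2^*,q^*)$, conditions \eqref{eq:NewCond1z} and \eqref{eq:NewCond1zinf} yield $\delta>0$, $c_\eps>0$ with
\begin{align*}
|g_1(s)|&\leq \eps |s|^{\g-1} \quad\hbox{ if }|s|\in [0,\delta],\\
|g_1(s)|&\leq c_\eps |s|^{\beta-1} \quad\hbox{ if }|s|\in (\delta,1/\delta),\\
|g_1(s)|&\leq \eps |s|^{q^*-1} \quad\hbox{ if }|s|\in [1/\delta, +\infty),
\end{align*}
after which the first and third contributions are bounded by $\eps$ times constants (using boundedness of $\{u_n\}$ in $L^\gamma$ and $L^{q^*}$, both legitimate since $2^*<\gamma<q^*$), and the middle one vanishes by the compact embedding $\X_0\hookrightarrow\hookrightarrow L^\beta(\RN)$; the remaining steps then proceed as in Lemma \ref{le:convG}. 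Your treatment of the case $\gamma>N$ (where $q\in(N,\gamma)$ is indeed chosen, so the $L^\infty$ embedding is available) is fine and matches the paper's one-line disposal of that case, and your Vitali/tightness details via Lemma \ref{le:strauss} are a reasonable fleshing-out of Steps 2--3; but as written the proposal does not prove the lemma in the generality in which it is stated and used in Proposition \ref{pr:solz}.
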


\begin{proof}
In the case $\g>N$, the arguments are similar to those of the proof of Lemma \ref{le:convG}. Here we treat only the case $\g\in (2^*,N]$, enlightening only the main differences.
\\
By \eqref{eq:NewCond1z} and \eqref{eq:NewCond1zinf}, take any $\eps>0$ and $\beta\in(2^*,q^*)$, then we find $\delta>0$ and $c_\eps>0$ such that 
\begin{align*}
&|g_1(s)|\leq \eps |s|^{\g-1}
\quad\hbox{ if }|s|\in [0,\delta],
\\
&|g_1(s)|\leq c_\eps |s|^{\beta-1}
\quad\hbox{ if }|s|\in (\delta,1/\delta),
\\
&|g_1(s)|\leq \eps |s|^{q^*-1}
\quad\hbox{ if }|s|\in [1/\delta, +\infty).
\end{align*}
Therefore
\begin{align*}
\int_{\R^N}|g_1(u_n)(u_n-u_0)|\, dx
&\leq \eps \int_{\R^N}|u_n|^{\g-1}|u_n-u_0| \,dx
+c_\eps \int_{\R^N}|u_n|^{\beta-1}|u_n-u_0|\, dx
\\
&\quad+\eps \int_{\R^N}|u_n|^{q^*-1}|u_n-u_0| \,dx,
\end{align*}
and, by the compact embedding of $\X_0$ into $L^\beta(\RN)$, the boundedness of the sequence $\{u_n\}$ in $\X_0$, we infer that
$$\limsup_n \int_{\R^N}|g_1(u_n)(u_n-u_0)|\, dx\leq \eps C$$
for some constant $C>0$. Now the proof goes on similarly as in Lemma \ref{le:convG}.
\end{proof}

Solutions of \eqref{eqThetaz} will be found as critical points of the functional $I_\t:\X_0\to \R$ defined as 
\[
I_\t(u)=\frac12 \irn A_\t(|\n u|^2)\, dx+\irn G_2(u)\, dx-\irn G_1(u)\, dx.
\]
which is well defined in $\X_0$. 
Here and in what follows, with an abuse of notation, we use  $I_\theta$, $J_\theta$, $m_\theta$, $\tilde{m}_\theta$, $\Gamma$, and  $\Sigma$ in the zero mass setting, as well.

We show that  $I_\t$ satisfies the mountain pass geometry.

\begin{Lem} \label{PM1z} 
For any $\t\in (0,\t_1]$, the functional $I_\t:\X_0 \to \mathbb{R}$ verifies the mountain pass geometry. More precisely:
\begin{itemize}
\item[(i)] there are $\alpha, \rho>0$ such that
$I_\t(u) \geq \alpha$, for $\|u\|_0=\rho$;	
\item[(ii)] there is $\bar u \in \X_0 \setminus\{0\}$, independent of $\t\in (0,\t_1]$, with $\|\bar u \|_0>\rho$ and $|\n \bar u|<1-\t_1$, almost everywhere in $\RN$, and such that $I_\t(\bar u)<0$.
\end{itemize}
\end{Lem}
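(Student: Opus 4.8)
The plan is to mirror the proof of Lemma \ref{PM1} in the positive mass case, the only genuine difference being that the norm $\|\cdot\|_0$ of $\X_0$ no longer contains an $L^\g$-term. In the positive mass proof the coercive contribution $\frac{m}{2\g}|u|_\g^\g$ arising from $G_2-G_1$ was available; here we only know that $G_2\geq 0$, so the control of $\irn G_1(u)\,dx$ near the origin must be extracted purely from the gradient norms, by means of an interpolation estimate that exploits $\g>2^*$.

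For part (i) I would start from $G_2\geq 0$ together with \eqref{A2q} to obtain
\[
I_\t(u)\geq \frac{\bar c}{2}\bigl(|\n u|_2^2+|\n u|_q^q\bigr)-\irn G_1(u)\,dx.
\]
Since $q>N$, the embedding $\X_0\hookrightarrow L^\infty(\RN)$ gives $|u|_\infty\leq C\|u\|_0$, so for $\|u\|_0=\rho$ small one has $|u|_\infty\leq \bar c_2$ and hence, by \eqref{G1barcz}, $G_1(u)\leq \bar c_1|u|^\g$ pointwise. The decisive step is the interpolation inequality
\[
|u|_\g^\g\leq |u|_{2^*}^{2^*}\,|u|_\infty^{\g-2^*}\leq C\,|\n u|_2^{2^*}\,\|u\|_0^{\g-2^*},
\]
valid because $\g>2^*$ (combining the Sobolev inequality $|u|_{2^*}\leq C|\n u|_2$ with the $L^\infty$-bound). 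Using $|\n u|_2\leq\|u\|_0=\rho$, the right-hand side is $\leq C\rho^{\g-2}|\n u|_2^2$, which for $\rho$ small is $\leq \frac{\bar c}{4}|\n u|_2^2$ since $\g>2$. Absorbing this into the coercive term leaves $I_\t(u)\geq \frac{\bar c}{4}\bigl(|\n u|_2^2+|\n u|_q^q\bigr)$, and a case distinction on the constraint $|\n u|_2^2+|\n u|_q^2=\rho^2$ (according to whether the $L^2$ or the $L^q$ part of the gradient carries at least half of the mass) yields $I_\t(u)\geq \a:=c\,\rho^q>0$. Note that $\a,\rho$ are allowed to depend on $\t$, so no uniformity is required here.

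For part (ii) I would reuse verbatim the Berestycki--Lions test function of Lemma \ref{PM1}(ii): the radial plateau $u_R$ equal to $\xi_0$ on $B_R$, decreasing affinely to $0$ on $B_{R+\sqrt R}\setminus B_R$ and vanishing outside, so that $|\n u_R|=\xi_0/\sqrt R<1-\t_1$ for $R$ large. Writing $I_\t(u)=\frac12\irn A_\t(|\n u|^2)\,dx-\irn G(u)\,dx$ and using \eqref{g3}, one shows exactly as in \cite{BerLions} that $\irn G(u_R)\,dx>0$ for $R$ large. Setting $\bar u=u_R(\cdot/t)$ preserves $|\n\bar u|<1-\t_1$; since $(1-\t_1)^2\le 1-\t_1\le 1-\t$ no truncation occurs, whence $A_\t(|\n\bar u|^2)=A(|\n\bar u|^2)$ and $I_\t(\bar u)=I_{\t_1}(\bar u)$ is independent of $\t$. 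By the scalings $|\n\bar u|_p^p=t^{N-p}|\n u_R|_p^p$ and $\irn G(\bar u)\,dx=t^N\irn G(u_R)\,dx$, one gets $I_\t(\bar u)\leq c_1\bigl(t^{N-2}|\n u_R|_2^2+t^{N-q}|\n u_R|_q^q\bigr)-t^N\irn G(u_R)\,dx$, which tends to $-\infty$ as $t\to+\infty$ because $N>N-2$ and $N>N-q$; moreover $\|\bar u\|_0\geq t^{\frac{N-2}2}|\n u_R|_2\to+\infty>\rho$. Choosing $t$ large then produces the required $\bar u$.

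The main obstacle is the estimate in part (i). Because $\|\cdot\|_0$ controls $|\n u|_q$ but not $|u|_\g$, the naive bound $\irn G_1(u)\,dx\leq C\|u\|_0^\g$ fails whenever $q>\g$: on the sphere $\|u\|_0=\rho$ the smallest value of the coercive term is of order $\rho^q$, which could be dominated by the subtracted $\rho^\g$. The interpolation inequality circumvents this by tying the $G_1$-contribution to $|\n u|_2^{2^*}$, which for small $\rho$ is dominated by $|\n u|_2^2$ irrespective of the relative size of $q$ and $\g$. Checking that this absorption works uniformly in both ranges $\g>N$ and $\g\in(2^*,N]$, and that only the behaviour of $G_1$ near $0$ (and not \eqref{g1zinf}) is needed since $u$ is small in $L^\infty$ on the sphere $\|u\|_0=\rho$, is the point deserving care.
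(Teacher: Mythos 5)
Your part (ii) is the paper's argument verbatim, and your part (i) is sound in the regime $\g>N$: there $q>N$, the embedding $\X_0\hookrightarrow L^\infty(\RN)$ is available, and your interpolation $|u|_\g^\g\le|u|_{2^*}^{2^*}|u|_\infty^{\g-2^*}$ is a harmless variant of the paper's direct estimate $|u|_\g^\g\le C\big(|\n u|_2^\g+|\n u|_q^\g\big)$. The genuine gap is exactly the point you flag at the end and then wave away: the regime $\g\in(2^*,N]$. There the exponent $q$ defining $\X_0$ is the one provided by \eqref{g1zinf}, so $q\in\big(\tfrac{N\g}{N+\g},N\big)$, in particular $q<N$, and $\X_0=\X_{0,{\rm rad}}^{2,q}$ does \emph{not} embed into $L^\infty(\RN)$; it embeds only into $L^p(\RN)$ for $p\in[2^*,q^*]$, and by Lemma \ref{le:strauss} its elements may be unbounded near the origin. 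Hence on the sphere $\|u\|_0=\rho$ you cannot assert $|u|_\infty\le\bar c_2$; both the pointwise bound $G_1(u)\le\bar c_1|u|^\g$ from \eqref{G1barcz} and the factor $|u|_\infty^{\g-2^*}\le C\|u\|_0^{\g-2^*}$ in your interpolation collapse, so the absorption scheme has nothing to absorb with.

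Moreover, your claim that ``only the behaviour of $G_1$ near $0$ (and not \eqref{g1zinf}) is needed'' is false in this regime, and this is not a technicality a sharper estimate can remove: with no growth restriction on $g$ at infinity, $\irn G_1(u)\,dx$ need not even be finite for $u\in\X_0$ when $q<N$, since $u$ can be unbounded; the well-definedness of $I_\t$ itself rests on \eqref{g1barczinf}--\eqref{G1barczinf}, hence on \eqref{g1zinf}. This is precisely why the paper imposes \eqref{g1zinf} when $\g\le N$. Its proof combines \eqref{eq:NewCond1z} and \eqref{eq:NewCond1zinf} to get the global bound $0\le G_1(s)\le\eps\big(|s|^\g+|s|^{q^*}\big)+c_\eps|s|^\b$ for all $s\in\R$, with $\b\in(\max\{2^*,q\},q^*)$, and then uses the embeddings $\X_0\hookrightarrow L^p(\RN)$, $p\in[2^*,q^*]$, to obtain, for $\rho<1$, $I_\t(u)\ge c\big[\|u\|_0^q-\|u\|_0^\b-\eps\big(\|u\|_0^\g+\|u\|_0^{q^*}\big)\big]\ge\a>0$. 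To repair your proof you must replace the $L^\infty$/interpolation step in the case $\g\in(2^*,N]$ by an argument of this type, which genuinely uses the growth of $g_1$ at infinity.
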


\begin{proof}
(i) 
We start with the case $\g>N$. We fix $q\in (N,\g)$. By the continuous embedding of $\X_0$ into $L^\infty(\RN)$, and by \eqref{Gbarcz}, we can consider $\rho>0$ sufficiently small such that
\begin{equation*}
G(u(x))\le \bar c_1|u(x)|^{\g}, \qquad \hbox{a.e. $x\in \RN$ and for any $u\in \X_0$ with $\|u\|_0=\rho$.}
\end{equation*} 
Hence, by \eqref{A2q} and since $\X_0$ is embedded into $L^\g(\RN)$, for any $u\in \X_0$ with $\|u\|_0=\rho$, we have
\begin{align*}
I_\t(u) & \ge c \left(|\n u|^2_2 + |\n u|^q_q-|u|_\g^\g\right) 
\ge c \left(|\n u|^2_2 + |\n u|^q_q
-|\n u|^\g_2 - |\n u|^\g_q\right)
\ge  \a>0.
\end{align*}
Let us consider now the case $\g\in (2^*,N]$.
By \eqref{eq:NewCond1z} and \eqref{eq:NewCond1zinf}, take any $\eps>0$ and $\beta\in(\max\{2^*,q\},q^*)$, then we find $c_\eps>0$ such that 
\[
0\le G_1(s)\le \eps\left(|s|^\g+|s|^{q^*}\right)+c_\eps |s|^\beta, \qquad\text{for all }s\in \R.
\]
Hence, if $\rho<1$, we have
\begin{align*}
I_\t(u) & \ge c \left(|\n u|^2_2 + |\n u|^q_q\right)
-\eps \left(|u|^\g_\g+|u|^{q^*}_{q^*}\right)-c_\eps |u|^\beta_\b
\\
&\ge c \left[|\n u|^2_2 + |\n u|^q_q
-\eps\left(|\n u|^\g_2 + |\n u|^\g_q
+|\n u|_2^{q^*} + |\n u|_q^{q^*}\right)
-\left(|\n u|^\b_2 + |\n u|^\b_q\right)\right]
\\
&\ge c \left[ \| u\|^q_0-\| u\|^\b_0
-\eps\left(\| u\|^\g_0 + \| u\|^{q^*}_0\right)\right]
\ge \a>0.
\end{align*}
(ii) As in the proof of Lemma \ref{PM1}.
\end{proof}

Let us define the mountain pass level for the functional $I_\t$
\[
m_\t:=\inf_{\g\in \Gamma}\max_{t\in [0,1]}I_\t(\g(t)), 
\]
where 
\[
\Gamma:=\{\g\in \cC([0,1],\X_0)\mid \g(0)=0, \g(1)=\bar u\}.
\]
By Lemma \ref{PM1}, we deduce that $m_\t\ge \a$, for any $\t\in (0,\t_1]$.

Observe that, since $|\n \bar u|<1-\t_1$, we have that $I_{\t_1}(t \bar u)=I_\t(t\bar u)$, for any $t\in [0,1]$ and for any $\t\in (0,\t_1]$. Hence we deduce that
\begin{equation*}\label{mthetaz}
m_\t\le \max_{t \in [0,1]}I_\t(t\bar u)
=\max_{t \in [0,1]}I_{\t_1}(t\bar u),
\end{equation*}
for any $\t\in (0,\t_1]$. Hence there exists $c>0$  (independent of  $\t\in (0,\t_1]$) such that 
\begin{equation}\label{mtbddz}
0< m_\t \le c_2, \qquad \hbox{for any $\t\in (0,\t_1]$.}
\end{equation}

As done in Section \ref{se>}, we define the  functional $J_\t:\R\times \X_0\to \R$ as 
\[
J_\t(\s, u)=
I_\t(u(e^{-\s}\cdot ))
=\frac{e^{N\s}}2 \irn A_\t(e^{-2\s}|\n u|^2)\, dx+e^{N\s}\irn G_2(u)\, dx-e^{N\s}\irn G_1(u)\, dx.
\]
The functional $J_\t$ has a mountain pass geometry and we can define its mountain pass level as
\[
\tilde m_\t:=\inf_{(\s,\g)\in \Sigma\times \Gamma}\max_{t\in [0,1]}J_\t\big(\s(t),\g(t)\big), 
\]
where 
\[
\Sigma:=\{\s\in \cC([0,1],\R)\mid \s(0)=\s(1)=0\}.
\]
The following  holds
\begin{Lem}
For any $\t\in (0,\t_1]$, the mountain pass levels of $I_\t$ and $J_\t$ coincide, namely $m_\t=\tilde m_\t$.
\end{Lem}

\begin{Lem}\label{le:ekelandz}
Let $\t\in (0,\t_1]$ and $\eps>0$. Suppose that $\tilde\gamma\in \Sigma \times\Gamma$ satisfies 
\[
\max_{t \in [0,1]}J_\t(\tilde \gamma(t))\le m_\t+\eps,
\]
then there exists $(\s, u)\in \R\times \X_0$ such that
\begin{enumerate}
\item ${\rm dist}_{\R \times \X_0}
\big((\t,u),\tilde{\gamma}([0,1])\big)\le 2 \sqrt{\eps}$;
\item $J_\t(\s,u)\in [m_\t-\eps,m_\t+\eps]$;
\item $\|D J_\t(\s,u)\|_{\R \times \X^*}\le 2 \sqrt{\eps}$.
\end{enumerate}
\end{Lem}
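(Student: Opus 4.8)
The plan is to obtain this exactly as in Lemma~\ref{le:ekeland}, namely as a quantitative consequence of Ekeland's variational principle applied to the path functional associated with the minimax characterization of $m_\t$. The structural ingredients are the same as in the positive mass case: the functional $J_\t$ is of class $\cC^1$ on the Banach space $\R\times\X_0$ (well-definedness and differentiability follow from the embeddings of $\X_0$ recalled in Section~\ref{seff}, together with the growth bounds \eqref{g1barcz}--\eqref{G1barczinf}); by Lemma~\ref{PM1z} it enjoys the mountain pass geometry; and its mountain pass level coincides with that of $I_\t$, i.e. $m_\t=\tilde m_\t$. Since these are precisely the hypotheses under which Ekeland's principle \cite[Theorem 2.8]{Willem} (cf. \cite[Lemma 2.3]{jj}) yields the assertion, the proof of Lemma~\ref{le:ekeland} carries over once $\X$ is replaced by $\X_0$.

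Spelling the argument out, I would equip $E:=\Sigma\times\Gamma$ with the uniform distance inherited from $\cC([0,1],\R\times\X_0)$, under which $E$ is a complete metric space (both $\Sigma$ and $\Gamma$ being closed under uniform convergence since the endpoint constraints are preserved), and consider $\Psi:E\to\R$ defined by $\Psi(\tilde\gamma):=\max_{t\in[0,1]}J_\t(\tilde\gamma(t))$. The function $\Psi$ is continuous and, by the definition of $\tilde m_\t=m_\t$, bounded below by $m_\t$. Applying Ekeland's variational principle to the near-minimizer $\tilde\gamma$ of the hypothesis (which satisfies $\Psi(\tilde\gamma)\le m_\t+\eps$) yields a path $\tilde\gamma_\eps=(\s_\eps,\gamma_\eps)\in E$ with $\Psi(\tilde\gamma_\eps)\le m_\t+\eps$, with uniform distance from $\tilde\gamma$ at most $\sqrt{\eps}$, and with the strict minimality property $\Psi(\tilde\gamma_\eps)\le \Psi(h)+\sqrt{\eps}\,d(h,\tilde\gamma_\eps)$ for every $h\in E$. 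The point $(\s,u)$ is then selected among the maximizers of $t\mapsto J_\t(\tilde\gamma_\eps(t))$, so that items (1) and (2) follow at once from the distance estimate and the level bound.

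The heart of the proof, and the step I expect to be the main obstacle, is the gradient estimate (3), i.e. $\|DJ_\t(\s,u)\|_{\R\times\X^*}\le 2\sqrt{\eps}$ at some maximizer. This is the usual minimax/deformation mechanism: if at every near-maximizer of $\tilde\gamma_\eps$ the derivative had norm exceeding $2\sqrt{\eps}$, one could build, via a locally Lipschitz pseudo-gradient vector field on $\R\times\X_0$ vanishing near the endpoints, a deformation of the path that strictly lowers $\Psi$ by more than $\sqrt{\eps}$ times the size of the perturbation, contradicting the Ekeland minimality of $\tilde\gamma_\eps$. Care is needed because the maximum of $J_\t$ along the path may be attained on a nondegenerate set rather than at a single point, so the deformation must decrease $J_\t$ uniformly on a neighbourhood of that set while keeping $\s(0)=\s(1)=0$ and $\gamma(0)=0$, $\gamma(1)=\bar u$ fixed.

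Finally, I would observe that nothing in this scheme uses the positive mass assumption \eqref{g1}: the sole role of the mass was in securing the compact embeddings and the mountain pass geometry, both of which have been re-established in the zero mass setting in Lemma~\ref{PM1z} and in the bounds \eqref{g1barcz}--\eqref{G1barczinf}. Hence the conclusion holds verbatim with $\X$ replaced by $\X_0$.
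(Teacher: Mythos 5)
Your proposal is correct and takes essentially the same route as the paper: the authors obtain Lemma~\ref{le:ekelandz}, exactly like its positive-mass counterpart Lemma~\ref{le:ekeland}, as an immediate consequence of Ekeland's variational principle in the form of \cite[Theorem 2.8]{Willem} (cf.\ \cite[Lemma 2.3]{jj}) applied on the path space $\Sigma\times\Gamma$, with $\X$ simply replaced by $\X_0$. The details you spell out (completeness of the path space, minimization of the max-functional, and the pseudo-gradient deformation yielding the derivative bound at a maximizer) are precisely the standard mechanics behind that citation.
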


\begin{Prop}\label{pr:sequencez}
For any $\t\in (0,\t_1]$, there exists a sequence $\{(\s_n,u_n)\} \subset \R \times \X_0$ such that, as $n \to +\infty$, we get 
\begin{enumerate}
\item $\s_n \to 0$;
\item $J_\t(\s_n,u_n)\to m_\t$; 
\item $\de_\s J_\t(\s_n,u_n)\to 0$; 
\item $\de_u J_\t(\s_n,u_n)\to 0$ strongly in $\X_0^*$. 
\end{enumerate}

\end{Prop}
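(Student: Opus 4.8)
The plan is to mirror exactly the one-line proof of Proposition \ref{pr:sequence} in the positive mass case, since all the ingredients have already been put in place for the space $\X_0$: the mountain pass geometry of $J_\t$ (established via Lemma \ref{PM1z} together with the argument producing $\tilde m_\t$), the identity $m_\t=\tilde m_\t$, and the Ekeland-type deformation result Lemma \ref{le:ekelandz}. Concretely, I would fix a sequence $\eps_n\to 0^+$ (say $\eps_n=1/n$) and, for each $n$, produce a near-optimal path to which Lemma \ref{le:ekelandz} applies, then read off conclusions (1)--(4) from the three assertions of that lemma.

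First I would use $m_\t=\tilde m_\t$ to choose the near-minimizing paths with a controlled $\s$-component. Since
\[
\tilde m_\t=\inf_{(\s,\g)\in \Sigma\times\Gamma}\max_{t\in[0,1]}J_\t\big(\s(t),\g(t)\big)=m_\t=\inf_{\g\in\Gamma}\max_{t\in[0,1]}I_\t(\g(t)),
\]
and because $J_\t(0,u)=I_\t(u)$, for each $n$ I can pick $\g_n\in\Gamma$ with $\max_{t}I_\t(\g_n(t))\le m_\t+\eps_n$ and set $\tilde\g_n:=(0,\g_n)\in\Sigma\times\Gamma$, a path whose $\s$-component vanishes identically. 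Applying Lemma \ref{le:ekelandz} to $\tilde\g_n$ yields $(\s_n,u_n)\in\R\times\X_0$ with
\[
{\rm dist}_{\R\times\X_0}\big((\s_n,u_n),\tilde\g_n([0,1])\big)\le 2\sqrt{\eps_n},\qquad J_\t(\s_n,u_n)\in[m_\t-\eps_n,m_\t+\eps_n],\qquad \|DJ_\t(\s_n,u_n)\|_{\R\times\X_0^*}\le 2\sqrt{\eps_n}.
\]

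From here the four conclusions follow immediately. The distance bound, combined with the fact that every point of $\tilde\g_n([0,1])$ has zero $\s$-coordinate, forces $|\s_n|\le 2\sqrt{\eps_n}\to 0$, giving (1). The sandwich estimate on the level gives $J_\t(\s_n,u_n)\to m_\t$, which is (2). Finally, $\|DJ_\t(\s_n,u_n)\|_{\R\times\X_0^*}\to 0$ splits into its two components $\de_\s J_\t(\s_n,u_n)\to 0$ and $\de_u J_\t(\s_n,u_n)\to 0$ strongly in $\X_0^*$, which are precisely (3) and (4). I do not expect any genuine obstacle here: the only point requiring a small observation is the convergence $\s_n\to 0$, and this is secured entirely by the choice of paths with identically vanishing $\s$-component, so that the Ekeland distance estimate controls $|\s_n|$ directly. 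Everything else is a verbatim transcription of the positive-mass argument with $\X$ replaced by $\X_0$.
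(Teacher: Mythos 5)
Your proof is correct and follows exactly the route the paper takes: the paper's own argument (given explicitly only for Proposition \ref{pr:sequence} and tacitly reused here) is precisely ``apply Lemma \ref{le:ekelandz} with $\eps\to 0$,'' and your choice of near-optimal paths of the form $(0,\g_n)$, justified by $m_\t=\tilde m_\t$ and $J_\t(0,\cdot)=I_\t$, is the standard way to make the one-line proof rigorous, including the control $|\s_n|\le 2\sqrt{\eps_n}$ that yields conclusion (1). No gaps; this is a faithful, slightly more detailed transcription of the paper's argument in the space $\X_0$.
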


\begin{Prop}\label{pr:solz}
For any $\t\in (0,\t_1]$, there exists $u_\t\in \X_0$ a non-trivial  solution of \eqref{eqTheta} such $I_\t(u_\t)=m_\t$. Moreover
there exists $C>0$ such that 
\begin{equation}\label{unifbddz}
\|u_\t\|_0\le C, \quad\hbox{ for any }\t\in (0,\t_1].
\end{equation}
Finally $u_\t$ is a weak solution of 
\begin{equation}\label{eqradz}
-\big(r^{N-1}a_\t(|u'_\t(r)|^2)u'_\t(r)\big)'=r^{N-1}g(u_\t(r)),
\end{equation}
namely 
\[
\int_0^{+\infty}r^{N-1}a_\t(|u'_\t(r)|^2)u'_\t(r)v'(r)\, dr
=\int_0^{+\infty}r^{N-1}g(u_\t(r))v(r)\,dr,
\]
for all $v\in \X_0$.
\end{Prop}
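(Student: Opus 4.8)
The plan is to follow the blueprint already executed in Proposition~\ref{pr:sol} for the positive mass case, replacing the embedding facts with their zero mass counterparts (Lemma~\ref{le:convGz} in place of Lemma~\ref{le:convG}) and being careful about which norm controls the sequence. First I would invoke Proposition~\ref{pr:sequencez} to obtain a Palais--Smale-type sequence $\{(\s_n,u_n)\}\subset\R\times\X_0$ with $\s_n\to0$, $J_\t(\s_n,u_n)\to m_\t$, and $\de_\s J_\t(\s_n,u_n)\to0$, $\de_u J_\t(\s_n,u_n)\to0$ in $\X_0^*$. Writing out $J_\t(\s_n,u_n)=m_\t+o_n(1)$, $\de_\s J_\t(\s_n,u_n)=o_n(1)$, and $\de_u J_\t(\s_n,u_n)[u_n]=o_n(1)\|u_n\|_0$ yields the same three-equation system as \eqref{sistema}. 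Combining the first two equations gives
\[
e^{(N-2)\s_n}\irn a_\t(e^{-2\s_n}|\n u_n|^2)|\n u_n|^2\,dx=Nm_\t+o_n(1),
\]
and since $\s_n\to0$, the lower bound in \eqref{a2q} forces $\{u_n\}$ to be bounded in $\X_0$. (Note that in the zero mass case boundedness in $\X_0$ is already the full boundedness we need, so the extra argument used in Proposition~\ref{pr:sol} to pass from $\X_0$-boundedness to $\X$-boundedness is unnecessary here.) Thus, up to a subsequence, $u_n\weakto u_\t$ in $\X_0$.

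Next I would show $u_\t$ is a weak solution. Because $\de_u J_\t(\s_n,u_n)\to0$ in $\X_0^*$ and $\s_n\to0$, passing to the limit shows $u_\t$ weakly solves \eqref{eqThetaz}, whence
\[
\irn a_\t(|\n u_\t|^2)|\n u_\t|^2\,dx+\irn g_2(u_\t)u_\t\,dx=\irn g_1(u_\t)u_\t\,dx.
\]
The crucial compactness step is then to upgrade weak to strong convergence. By weak lower semicontinuity of the convex gradient term and Fatou's Lemma applied to the nonnegative $g_2(u_\t)u_\t$ (recall $G_2,g_2\ge0$ in the zero mass setting), together with Lemma~\ref{le:convGz} giving $\irn g_1(u_n)u_n\,dx\to\irn g_1(u_\t)u_\t\,dx$, I would run exactly the chain of inequalities in Proposition~\ref{pr:sol}. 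The third equation of the system sandwiches the limit and forces the two inequalities to be equalities, yielding
\[
\irn a_\t(|\n u_\t|^2)|\n u_\t|^2\,dx=\lim_{n}\irn a_\t(|\n u_n|^2)|\n u_n|^2\,dx.
\]
By Lemma~\ref{lem:convgrad} this gives $u_n\to u_\t$ strongly in $\X_0$, and the matching of the $g_2$-integrals gives convergence of the remaining terms, so that $I_\t(u_\t)=m_\t$; in particular $u_\t\neq0$ since $m_\t\ge\a>0$.

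The nontriviality and the uniform bound \eqref{unifbddz} follow as before: from the identity $\irn a_\t(|\n u_\t|^2)|\n u_\t|^2\,dx=Nm_\t$ combined with \eqref{mtbddz} and the lower estimate in \eqref{a2q}, one extracts $\|u_\t\|_0\le C$ uniformly in $\t\in(0,\t_1]$. Finally, the radial formulation \eqref{eqradz} is the standard reduction of the weak formulation of \eqref{eqThetaz} to radial test functions, using that $u_\t$ is radially symmetric. The main obstacle I anticipate is the compactness/strong convergence step, specifically ensuring that Lemma~\ref{le:convGz} applies with the correct exponents in the regime $\g\in(2^*,N]$, where one must use the compact embedding $\X_0\hookrightarrow\hookrightarrow L^\beta(\RN)$ for $\beta\in(2^*,q^*)$ and control the top-growth term via \eqref{eq:NewCond1zinf}; the rest is a transcription of the positive mass argument with $G_2\ge0$ simplifying the Fatou step.
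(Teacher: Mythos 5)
Your proposal is correct and follows essentially the same route as the paper: the Palais--Smale sequence from Proposition \ref{pr:sequencez}, boundedness in $\X_0$ from the first two equations of the system together with \eqref{a2q}, the sandwich argument combining weak lower semicontinuity, Fatou's Lemma for the nonnegative $g_2$-term and Lemma \ref{le:convGz}, then Lemma \ref{lem:convgrad} for strong convergence, and finally the identity $\irn a_\t(|\n u_\t|^2)|\n u_\t|^2\,dx=Nm_\t$ with \eqref{mtbddz} for the uniform bound. Your observation that the zero mass case dispenses with the extra step upgrading $\X_0$-boundedness to $\X$-boundedness is exactly the simplification the paper exploits.
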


\begin{proof}
Fix $\t\in (0,\t_1]$. By Proposition \ref{pr:sequencez}, there exists a sequence $\{(\s_n,u_n)\} \subset \R \times \X_0$ such that
\begin{equation*}\label{sistemaz}
\begin{cases}
\dis\frac{e^{N\s_n}}{2}\irn A_\t(e^{-2\s_n}|\n u_n|^2)\, dx
+e^{N\s_n}\irn G_2(u_n)\, dx
-e^{N\s_n}\irn G_1(u_n) \, dx =m_\t+o_n(1),
\\[7mm]
\dis\frac{Ne^{N\s_n}}{2}\irn A_\t(e^{-2\s_n}|\n u_n|^2)\, dx
-e^{(N-2)\s_n}\irn a_\t(e^{-2\s_n}|\n u_n|^2)|\n u_n|^2\, dx
\\[2mm]
\dis \hspace{5.5cm}+Ne^{N\s_n}\irn G_2(u_n)\, dx
-Ne^{N\s_n}\irn G_1(u_n) \, dx =o_n(1),
\\[7mm]
\dis e^{(N-2)\s_n}\irn a_\t(e^{-2\s_n}|\n u_n|^2)|\n u_n|^2\, dx
+e^{N\s_n}\irn g_2(u_n)u_n\, dx
-e^{N\s_n}\irn g_1(u_n) u_n\, dx =o_n(1)\|u_n\|.
\end{cases}
\end{equation*}
From the first and the second equation of the previous system we get
\[
e^{(N-2)\s_n}\irn a_\t(e^{-2\s_n}|\n u_n|^2)|\n u_n|^2\, dx=N m_\t+o_n(1).
\]
Therefore, since $\s_n \to 0$, as $n \to +\infty$, by \eqref{a2q} we deduce that 
$\{u_n\}$ is a bounded sequence in $\X_0$.
Then there exists $u_\t\in \X_0$ such that $u_n \weakto u_\t$ in $\X_0$. Since $\de_u J_\t(\s_n,u_n)\to 0$ strongly in $\X^*_0$ and $\s_n \to 0$, we have that $u_\t$ is a weak (possibly trivial) solution of \eqref{eqThetaz} and so it satisfies
\[
\irn a_\t(|\n u_\t|^2)|\n u_\t|^2\, dx
+\irn g_2(u_\t)u_\t\, dx
=\irn g_1(u_\t) u_\t\, dx.
\]
Arguing as in proof of Proposition \ref{pr:sol} we can show that
\begin{equation*}\label{conv1z}
\irn a_\t(|\n u_\t|^2)|\n u_\t|^2\, dx 
= \lim_{n \to +\infty}\irn a_\t(|\n u_n|^2)|\n u_n|^2\, dx.
\end{equation*}
In view of Lemma \ref{lem:convgrad}, we have that $u_n \to u_\t$ strongly in $\X_0$ and so $I_\t(u_\t)=m_\t$. 
\\
Finally, since 
\[
\irn a_\t(|\n u_\t|^2)|\n u_\t|^2\, dx =Nm_\t,
\]
by \eqref{mtbddz} and \eqref{a2q}, we prove that there exists $C>0$ such that $\|u_\t\|_0\le C$, for any $\t\in (0,\t_1]$.
\end{proof}

We are now able to conclude the proof of Theorem \ref{main}.

\begin{proof}[Proof of Theorem \ref{main}]
When $\g>N$ we can change slightly the arguments of Section \ref{se>}. Here we deal just with the case $2^*<\l\le N$ and so we have to assume \eqref{g1zinf}.
\\
By Proposition \ref{pr:solz}, for any $\t\in (0,\t_1]$, there exists $u_\t\in \X_0$ a nontrivial solution of \eqref{eqThetaz} such $I_\t(u_\t)=m_\t$. 
Being $q< N$, we cannot repeat the arguments of the previous section and we follow some ideas of \cite[Lemma 3.2]{BDD}. Since $u_\t$ is a solution of \eqref{eqradz} in $(0,+\infty)$, it is easy to check that $u_\t$ is regular for $r>0$. 
Moreover,  $r^{N-1}a_\t(|u'_\t(r)|^2)u'_\t(r)$ satisfies the Cauchy condition at the origin so that it has a finite limit as $r\to0$. 
We claim that
\begin{equation}\label{eq:lim0}
\lim_{r\to 0}r^{N-1}a_\t(|u'_\t(r)|^2)u'_\t(r)=0.
\end{equation}
Suppose, by contradiction, that it is different from zero and then there should exist $r_0>0$ such that $|u'_\t(r)|>1-\t$, for $r\in (0,r_0]$. Therefore, for $r$ sufficiently small, 
\[
C\le \left|r^{N-1}a_\t(|u'_\t(r)|^2)u'_\t(r)\right|
=r^{N-1}|u'_\t(r)|^{q-1},
\]
namely
\[
|u'_\t(r)|\ge Cr^{-\frac{N-1}{q-1}}.
\]
By this we have
\[
r^{N-1}a_\t(|u'_\t(r)|^2)|u'_\t(r)|^2
=r^{N-1}|u'_\t(r)|^{q}
\ge Cr^{-\frac{N-1}{q-1}}
\]
near $0$, which is not integrable since $q < N$. Since $u_\t$ is a solution of \eqref{eqradz}, we get a contradiction.
\\
Let us prove the following

\medspace \noindent
{\sc Claim:} there exists $C>0$ such that 
\begin{equation}\label{claim}
|a_\t(|u'_\t(r)|^2)u'_\t(r)|\le C, \qquad \hbox{for any $r\ge 0$ and $\t\in (0,\t_1]$}.
\end{equation}

\medspace

\noindent By the regularity of $u_\t$, we infer that $u_\t'(0)=0$ and so also 
\[
a_\t(|u'_\t(0)|^2)u'_\t(0)=0.
\]
We now consider the case $r>0$. 
Integrating the equation \eqref{eqradz}, for any $r>0$, we have
\begin{equation*}
-a_\t(|u'_\t(r)|^2)u'_\t(r)
=\frac1{r^{N-1}}\int_0^r s^{N-1}g(u_\t(s))\, ds.
\end{equation*}
By Lemma \ref{le:strauss} and by \eqref{unifbddz}, we deduce that there exists $R>1$, such that 
\begin{equation}\label{stimaunifz}
 |u_\t(r)|\le \bar c_2, \quad\hbox{for any  $\t\in (0,\t_1]$ and for any $r>R$,} 
\end{equation}
where $\bar c_2$ is  given in \eqref{gbarcz}. 
\\
By the continuous embedding of $\X_0$ in $L^p(\RN)$, for $p\in [2^*,q^*]$, and \eqref{unifbddz},  there exists $C>0$ such that $|u_\t|_p\le C\|u_\t\|_0\le C$, for $p\in [2^*,q^*]$ and any $\t\in (0,\t_1]$.  So, using \eqref{gbarczinf}, we have that, for any $0<r\le R$ and $\t\in (0,\t_1]$,  
\begin{equation*}
|a_\t(|u'_\t(r)|^2)u'_\t(r)|
\le \frac1{r^{N-1}}\int_0^r s^{N-1}|g(u_\t(s))|\, ds\le C.
\end{equation*}
While, for any $r> R$, 
\begin{align*}
|a_\t(|u'_\t(r)|^2)u'_\t(r)|
&\le \frac1{r^{N-1}}\int_0^r s^{N-1}|g(u_\t(s))|\, ds
\\
&\le \frac1{r^{N-1}}\left(\int_0^R s^{N-1}|g(u_\t(s))|\, ds
+\int_R^r s^{N-1}|g(u_\t(s))|\, ds\right)\\
&\le \frac C{r^{N-1}}
+\underbrace{\frac{c_1}{r^{N-1}}\int_1^r s^{N-1}|g(u_\t(s))|\, ds}_{(A)}.
\end{align*}
We have to estimate $(A)$. First of all, by Lemma \ref{le:strauss} and \eqref{unifbddz}, for $r>1$, we have that
\[
|u_\t(r)| \le C r^{-\frac{N-2}2} |\n u_\t|_2\le \bar C r^{-\frac{N-2}2}.
\]
Hence, by \eqref{stimaunifz} and \eqref{gbarczinf}, since $2^*<\g< q^*$,
\begin{align*}
(A)&
\le \frac{C}{r^{N-1}}\int_1^r s^{N-1}\big(|u_\t(s)|^{\g-1}+|u_\t(s)|^{q^*-1}\big)\, ds
\\
&\le \frac{ C}{r^{N-1}}\int_1^r s^{N-1-\frac{N-2}2(\g-1)}\, ds
\le C \left(r^{1-\frac{N-2}2(\g-1)}+1\right)\le C.
\end{align*}
Therefore the claim  is proved.
\\
Now we conclude as in the previous section.
\end{proof}


\begin{thebibliography}{99}

\bibitem{AMP}
C.O. Alves, O.H. Miyagaki, A. Pomponio, {\em Solitary waves for a class of generalized Kadomtsev-Petviashvili equation in $\mathbb{R}^N$ with positive and zero mass}, J. Math. Anal. Appl., {\bf 477}, (2019), 523--535.

\bibitem{A}
A. Azzollini,  {\em Ground state solution for a problem with mean curvature operator in Minkowski space}, J. Funct. Anal., {\bf 266} (2014), 2086--2095.

\bibitem{A2}
A Azzollini, {\em On a prescribed mean curvature equation in Lorentz-Minkowski space},
Journal de Math\'ematiques Pures et Appliqu\'ees, {\bf 106} (2016), 1122--1140.

\bibitem{ADP}
A. Azzollini, P. d’Avenia,  A. Pomponio, {\em Multiple critical points for a class of nonlinear functionals}, Ann. Mat. Pura Appl. {\bf 190}, (2011), 507--523.

\bibitem{APS}
A. Azzollini, A. Pomponio, G. Siciliano, {\em On the Schr\"odinger-Born-Infeld system}, Bull Braz Math Soc, New Series, {\bf 50}, (2019), 275--289. 

\bibitem{BS}
R. Bartnik, L. Simon, {\it Spacelike hypersurfaces with prescribed boundary values and mean curvature,} Comm. Math. Phys. {\bf 87} (1982), 131--152.

\bibitem{BCP}
R. Bartolo, E. Caponio, A. Pomponio, 
{\it Spacelike graphs with prescribed mean curvature on exterior domains in the Minkowski spacetime},  Proc. Amer. Math. Soc. {\bf 149}, (2021), 5139--5151.

\bibitem{BerLions} H. Berestycki, P.L. Lions, {\em Nonlinear scalar field equations. I - existence of a ground state}, Arch. Ration. Mech. Anal. {\bf 82} (1983), 313--345.

\bibitem{BerLionsII} H. Berestycki, P.L. Lions, {\em Nonlinear scalar field equations. II. Existence of infinitely many solutions}, Arch. Ration. Mech. Anal. {\bf 82} (1983), 347--375.

\bibitem{BCF}
D. Bonheure, F. Colasuonno, J. F\"oldes, {\em 
On the Born-Infeld equation for electrostatic fields with a superposition of point charges}, Ann. Mat. Pura Appl. {\bf  198} (2019), 749--772.

\bibitem{BDD}
D. Bonheure, A. Derlet, C. De Coster, {\em Infinitely many radial solutions of a mean curvature equation in Lorentz– Minkowski space}, Rend. Istit. Mat. Univ. Trieste, {\bf 44}, (2012), 259--284.

\bibitem{BDP} 
D. Bonheure, P. d'Avenia, A. Pomponio, {\em On the electrostatic Born-Infeld equation with extended charges}, Comm. Math. Phys., {\bf 346}, (2016), 877--906.

\bibitem{BDPR}
D. Bonheure, P. d'Avenia, A. Pomponio, W. Reichel, {\em Equilibrium measures and equilibrium potentials in the Born-Infeld model}, J. Math. Pures Appl. {\bf 139} (2020), 35--62.

%


\bibitem{Bon-Iac2} D. Bonheure, A. Iacopetti, {\it On the regularity of the minimizer of the electrostatic Born-Infeld energy}, Arch. Ration. Mech. Anal. {\bf 232} (2019), 697--725.

\bibitem{Bon-Iac3} D. Bonheure, A. Iacopetti, {\it
A sharp gradient estimate and $W^{2,q}$ regularity for the prescribed mean curvature equation in the Lorentz-Minkowski space}, preprint, arXiv:2101.08594.


\bibitem{Bnat}
M. Born, {\it Modified field equations with a finite radius of the electron,} Nature {\bf 132} (1933), 282.

\bibitem{B}
M. Born, {\it On the quantum theory of the electromagnetic field,} Proc. Roy. Soc. London Ser. A {\bf 143} (1934), 410--437.

\bibitem{BInat}
M. Born, L. Infeld, {\it Foundations of the new field theory,} Nature {\bf 132} (1933), 1004.

\bibitem{BI}
M. Born, L. Infeld, {\it Foundations of the new field theory,} Proc. Roy. Soc. London Ser. A {\bf 144} (1934), 425--451.


\bibitem{CY}
S.-Y. Cheng, S.-T. Yau, {\it Maximal space-like hypersurfaces in the Lorentz-Minkowski spaces}, Ann. of Math. {\bf 104} (1976), 407--419.

\bibitem{CDPS}
P.L. Cunha, P. d’Avenia, A. Pomponio, G. Siciliano, {\em A multiplicity result for Chern-Simons-Schr\"odinger equation with a general nonlinearity}, Nonlinear Differential
Equations and Applications NoDEA {\bf 22}, (2015), 1831--1850.


\bibitem{DMP}
P. d'Avenia, J. Mederski, A.Pomponio, {\em Nonlinear scalar field equation with competing nonlocal terms},
	Nonlinearity {\bf 34} (2021), 5687--5707.


\bibitem{Dai} G. Dai, {\it Some results on surfaces with different mean curvatures in $\R^{N+1}$ and $\mathbb{L}^{N+1}$,} to appear in Annali di Matematica Pura ed Applicata, doi.org/10.1007/s10231-021-01118-1

%

\bibitem{H}
A. Haarala, {\it The electrostatic Born-Infeld equations with integrable charge densities}, preprint, arXiv:2006.08208.


\bibitem{HIT}
J. Hirata, N. Ikoma, K. Tanaka,  {\it Nonlinear scalar field equations in $\R^N$: mountain pass and symmetric mountain pass approaches},  Topol. Methods Nonlinear Anal. {\bf 35}, (2010), 253--276. 

\bibitem{jj}
L. Jeanjean, {\it Existence of solutions with prescribed norm for semilinear elliptic equations}, Nonlinear Anal. {\bf 28}, (1997), 1633--1659.




\bibitem{K}
M.K.-H. Kiessling, {\it On the quasi-linear elliptic PDE $-\n \cdot (\nabla u/\sqrt{1-|\nabla u|^2})=4\pi\sum_{k} a_k \delta_{s_k}$  in physics and geometry,} Comm. Math. Phys. {\bf 314} (2012), 509--523.

\bibitem{K-corr}
{M.K.-H. Kiessling, {\it Correction to: On the quasi-linear elliptic PDE $-\n \cdot (\nabla u/\sqrt{1-|\nabla u|^2})=4\pi\sum_{k} a_k \delta_{s_k}$  in physics and geometry,} Comm. Math. Phys. {\bf 364} (2018), 825--833.}

\bibitem{Kob} O. Kobayashi, {\it Maximal surfaces in the 3-dimensional Minkowski space $L^3$,} Tokyo J. Math. {\bf 6} (1983), no. 2, 297--309.

\bibitem{L}
G. Lieberman, {\it Boundary regularity for solutions of degenerate elliptic equations}, Nonlinear Anal. TMA {\bf 12}, (1988), 1203--1219.

\bibitem{MederskiBL} J. Mederski, {\em Nonradial solutions of nonlinear scalar field equations},  Nonlinearity {\bf 33} (2020), 6349--6380.

\bibitem{MederskiBL2} J. Mederski, {\em  General class of optimal Sobolev inequalities and nonlinear scalar field equations}, J. Differential Equations {\bf 281} (2021), 411--441.

\bibitem{P}
A. Pomponio, {\em Oscillating solutions for prescribed mean curvature equations: Euclidean and Lorentz-Minkowski cases}, Discrete Contin. Dyn. Syst., {\bf 38}, (2018), 3899--3911.


\bibitem{AW}{A. Pomponio, T. Watanabe}, { \it Some quasilinear elliptic equations involving multiple $p$-Laplacians},  Indiana Univ. Math. J. {\bf 67} (2018), no. 6, 2199--2224.

\bibitem{S}
W. A. Strauss, {\em Existence of solitary waves in higher dimensions}, Comm. Math. Phys. {\bf 55},  (1977), 149--162.

\bibitem{Willem}
M. Willem, {\em Minimax Theorems}, Birkh\"auser Verlag 1996.
\end{thebibliography}
\end{document}